\newtheorem{theorem}{Theorem}[section]
\newtheorem{proposition}[theorem]{Proposition}
\newtheorem{lemma}[theorem]{Lemma}
\newtheorem{corollary}[theorem]{Corollary}
\theoremstyle{definition}
\theoremstyle{remark}
\newtheorem{remark}[theorem]{Remark}
\newcommand{\U}{\bar{u}}
\newcommand{\R}{\mathbb{R}}
\numberwithin{equation}{section}
\DeclareMathOperator{\cexp}{c-exp}
\begin{document}

\title[Partial $W^{2,p}$ regularity]{\textbf{Partial $W^{2,p}$ regularity\\ for optimal transport maps}}

\author{Shibing Chen, Alessio Figalli}
\address{Mathematical Science Institute,
The Australian National University,
Canberra  ACT  2601}
\email{chenshibing1982@hotmail.com}

\address{Department of Mathematics RLM 8.100, The University of Texas at Austin, 2515 Speedway Stop C1200, Austin
TX 78712, USA}
\email{figalli@math.utexas.edu}
\thanks{}


\date{\today}

\dedicatory{}

\keywords{}
 \begin{abstract}
We prove that, in the optimal transportation problem with general costs and positive continuous densities, the potential function is always of class $W^{2,p}_{loc}$ for any $p \geq 1$ outside of a closed singular set of measure zero. We also establish global $W^{2,p}$ estimates when the cost is a small perturbation of the quadratic cost. The latter result is new even when the cost
is exactly the quadratic cost.
 \end{abstract}

 \maketitle
\section{Introduction}

Regularity of optimal transport maps is a very important problem that has been studied extensively in the recent years. For the special case when 
the cost function is given by $c(x,y)=\frac{1}{2}|x-y|^2$ (or equivalently $c(x,y)=-x\cdot y$, see the discussion in \cite[Section 3.1]{DFreview}), Caffarelli \cite{C90, C902, C91, C92, C96} developed a deep regularity theory. However, for general costs functions the situation was much more complicated.
A major breakthrough 
happened in 2005 when Ma, Trudinger, and Wang \cite{MTW} introduced a fourth order condition on the cost function (now known as {\em MTW condition}) that
guarantees the smoothness of optimal transport map under suitable global assumptions on the data. Later, it was shown by Loeper \cite{L1} that the MTW condition is actually a necessary condition. Motivated by these results,
a lot of efforts have been devoted to understanding the regularity properties of optimal map under the MTW condition, see for instance  \cite{FL,Liu,TW1,TW2,FR,Loe2,LV,LTW, LTW2, FRV-surfaces,KMC1,FRV-nec,FRV,FKM,FKM-spheres}.

\vskip 10pt

Unfortunately, as observed by Loeper in \cite{L1}
and further noticed in many subsequent works, the MTW condition is extremely restrictive and many interesting costs do not satisfy this condition. Hence, a natural and important question became the following: What can we say about the regularity of optimal transport maps when the MTW condition fails?
A first major answer was given by De Philippis and Figalli \cite{DF}: there, the authors proved that, 
without assuming neither the MTW condition nor any convexity on the domains, for the optimal transport problem with positive continuous (resp. positive smooth)  densities, the  potential function is always $C_{loc}^{1,\alpha}$ (resp. smooth) outside a closed singular set of measure zero.
In a related direction,  Caffarelli, Gonz\'{a}les, and Nguyen \cite{CGN} obtained an interior $C_{loc}^{2,\alpha}$ regularity result of optimal transport problem when
the densities are $C^\alpha$ and
the cost function is of the form $c(x,y)=\frac{1}{p}|x-y|^p$ with $2<p<2+\epsilon$ for some $\epsilon \ll 1$ (or, $p>1$ and the distance between source and target is sufficiently large). This interior regularity result was later extended by us
to a global one \cite{CF}. 

\vskip 10pt

The aim of this work is to further develop the techniques 
introduced in \cite{CF, CF1, DF} and prove a partial $W^{2,p}$ regularity result. More precisely we show that, 
 for the optimal transport problem with positive continuous densities,
 there exists a closed singular set of measure zero
 outside which the  potential function is of class $W^{2,p}_{loc}$ for any $p>1$ (in particular, the singular set is {\em independent} of the exponent $p$).
 As a corollary of our techniques together with an argument due to Savin \cite{Savin}, we are able to obtain global $W^{2,p}$
 estimates when the domains are convex and the cost function is $C^2$-close to $-x\cdot y$. 
\vskip 10pt

The paper is organized as follows. In section 2 we introduce some notation and state our main
results. Then, in section 3 we prove our key Proposition \ref{p1}, and finally in the last section we prove our main results.

\section{Preliminaries and main results}

 First, we introduce some conditions which should be satisfied by the cost. Let $X$  and $Y$ be
 two  bounded open subsets of $\mathbb{R}^n$. 
 \medskip
  
 (\textbf{C0}) The cost function $c:X\times Y\to \R$ is of class $C^3$, with $\|c\|_{C^3(X\times Y)}<\infty.$
 
 (\textbf{C1}) For any $x\in X$, the map $Y \ni y\mapsto D_xc(x,y)\in \mathbb{R}^n$ is injective.
 
 (\textbf{C2}) For any $y\in Y$, the map $X \ni x\mapsto D_yc(x,y)\in \mathbb{R}^n$ is injective.
   
 (\textbf{C3}) $\det(D_{xy}c)(x,y)\neq 0$ for all $(x,y)\in X\times Y.$
 
 \medskip
 
 A function $u: X\rightarrow \mathbb{R}$ is said {\it $c$-convex} if it can be written as
 \begin{equation} \label{121}
 u(x)=\underset{y\in Y}{\sup}\{-c(x,y)+\lambda_y\}
 \end{equation}
 for some family of constants $\{\lambda_y\}_{y \in Y}\subset \R$.
 Note that  $(\textbf{C0})$ and \eqref{121} imply that a $c$-convex function is semiconvex, namely, there exists a constant $K$ depending only
 on $\|c\|_{C^2(X\times Y)}$ such that $u+K|x|^2$ is convex. One immediate consequence of the semiconvexity is that $u$ is twice differentiable almost everywhere.
 
 Thanks to  $(\textbf{C0})$ and  $(\textbf{C1})$ it is well known (see for instance \cite[Chapter 10]{V1}) that there exists a unique optimal transport map.
 Also, there exists a $c$-convex function $u$ such that
 the optimal map is a.e. uniquely characterized in terms of $u$ 
 (and for this reason we denote it by $T_u$) via the relation
\begin{equation}
\label{eq:Tu}
 -D_xc(x, T_u (x))= \nabla u(x) \qquad \text{for a.e. }x.
\end{equation}
As explained for instance in \cite[Section 2]{DF} (see also \cite{DFreview}), the transport condition
$(T_u)_\#f=g$ implies that $u$ solves at almost every point the Monge-Amp\`ere type equation
\begin{equation}
\label{eq:MA T}
\det\Bigl(D^2u(x)+D_{xx}c\bigl(x,\cexp_x(\nabla u(x))\bigr) \Bigr)=\left|\det\left(D_{xy}c\bigl(x,\cexp_x(\nabla u(x))\bigr) \right) \right| \frac{f(x)}{g(\cexp_x(\nabla u(x)))},
\end{equation}
where $\cexp$ denotes the \textit{$c$-exponential map} defined as
\begin{equation}
\label{eq:cexp}
\text{for any $x\in X$, $y \in Y$, $p \in \R^n$},
\qquad
\cexp_x(p)=y \quad \Leftrightarrow \quad p=-D_xc(x,y).
\end{equation}
Notice that, with this notation, $T_u(x)= \cexp_x(\nabla u(x))$.

 For a $c$-convex function, in analogy with the subdifferential for convex functions, we can talk about its 
 $c$-subdifferential: If $u: X\rightarrow \mathbb{R}$ is a $c$-convex function, the {\it $c$-subdifferential} of $u$ at $x$ 
 is the (nonempty) set
 $$\partial_cu(x):=\bigl\{y\in \overline{Y}: u(z)\geq -c(z,y)+c(x,y)+u(x)\qquad \forall\,z\in X  \bigr\}.$$
 We also define \emph{Frechet subdifferential} of $u$ at $x$ as
 $$\partial^-u(x):=\bigl\{p\in\mathbb{R}^n: u(z)\geq u(x)+p\cdot(z-x)+o(|z-x|)\bigr\}.$$
 It is easy to check that \begin{equation}
 \label{eq:partial c}
 y\in \partial_cu(x)\quad\Longrightarrow\quad -D_xc(x,y)\in \partial^-u(x).
 \end{equation}
Also, it is a well-known fact (see for instance \cite[Chapter 10]{V1}) that the transport map $T_u$
and the $c$-subdifferential $\partial_cu$ are related by the inclusion $$
T_u(x) \in \partial_cu(x).
$$
In particular, since $\partial_cu(x)$
is a singleton at every differentiability point of $u$
(this follows by \eqref{eq:partial c}), we deduce that
\begin{equation}
\label{eq:u diff}
\partial_cu(x)=\{T_u(x)\}\qquad \text{whenever $u$ is differentiable at $x$.}
\end{equation}
The analogue of sublevels of a convex functions is played by the {\em sections}: given $y_0 \in \partial_cu(x_0)$, we define
$$S(x_0, y_0, u, h):=\{x: u(x)\leq -c(x, y_0)+c(x_0, y_0)+c(x_0, y_0)+u(x_0)+h\}.$$
Note that, whenever $u$ is differentiable at $x_0$ then $y_0=T_u(x_0)$.
To simplicity the notation, we will use $S_h(x_0)$ to denote $S(x_0, y_0, u, h)$ when no confusion arises.

Finally, we recall that given $u$ $c$-convex,
its $c$-transform $u^c$ is defined as
$$
u^c(y):=\sup_{x \in X}\{-c(x,y)-u(x)\}.
$$
With this definition, $u^c$ plays the role of $u$
for the transportation problem from $g$ to $f$.\\ 

 Our first main result states that, if $f$ and $g$ are positive continuous densities, then $u$ is of class $W^{2,p}_{loc}$
 for any $p\geq 1$ outside a closed set of measure zero.
 A crucial fact in our proof is to show that the singular set $\Sigma$ is {\em independent} of $p$.
 
 \begin{theorem}\label{t1}
Let $u$ be the potential function for the optimal transport problem from $(X, f)$  to $(Y, g)$ with cost $c$ satisfying $(\textbf{C0})$-$(\textbf{C3})$. Suppose $f:X\to \R^+$ and $g:Y\to \R^+$ are positive continuous densities.
Then there exists a closed set $\Sigma\subset X$ of measure zero such that $u\in W^{2,p}_{loc}(X\setminus\Sigma)$ for any $p\geq 1.$
\end{theorem}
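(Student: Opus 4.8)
The plan is to reduce Theorem~\ref{t1} to an interior estimate by combining two ingredients: first, the partial regularity result of De Philippis--Figalli \cite{DF}, which produces a closed set $\Sigma\subset X$ with $|\Sigma|=0$ such that $u\in C^{1,\alpha}_{loc}(X\setminus\Sigma)$ and $T_u$ is a homeomorphism between $X\setminus\Sigma$ and its image; and second, a quantitative $W^{2,p}$ estimate for sections on which the potential is already strictly $c$-convex with $C^{1,\alpha}$ regularity. The key point, and the reason the singular set can be taken independent of $p$, is that the set $\Sigma$ is dictated purely by where $u$ fails to be $C^{1,\alpha}$ (equivalently, where sections fail to be ``well-balanced''), and this has nothing to do with the integrability exponent; once we are on a compact subset of $X\setminus\Sigma$, a single covering argument upgrades the regularity to $W^{2,p}$ for every finite $p$ simultaneously.

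Concretely, I would proceed as follows. Fix a compact set $K\subset X\setminus\Sigma$. By \cite{DF} we know $u\in C^{1,\alpha}(K')$ for a slightly larger compact $K'\subset X\setminus\Sigma$, and moreover at each $x_0\in K'$ the sections $S_h(x_0)$ shrink nicely to $x_0$ and are comparable to ellipsoids for $h$ small (this is the content of the arguments in \cite{DF,CF1}). The strategy is then to localize: around each $x_0\in K$, after the standard affine/``$c$-exponential'' change of variables that normalizes the section $S_h(x_0)$ to be comparable to a ball, the function $u$ solves a Monge--Amp\`ere type equation \eqref{eq:MA T} whose right-hand side is continuous and bounded between two positive constants, and whose lower order term $D_{xx}c$ becomes a small perturbation of a linear term because of $(\textbf{C0})$ and the smallness of the section. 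This is exactly the regime of Proposition~\ref{p1}; applying it gives, on a definite fraction of the section, the estimate $\|D^2 u\|_{L^p}\le C(p)$ with $C(p)$ depending only on $p$, on $\|c\|_{C^3}$, on the modulus of continuity of $f,g$, and on the constants controlling the shape of the sections over $K'$ --- but \emph{not} on $x_0$. A Vitali-type covering of $K$ by such normalized sections, together with the fact that the overlap is controlled, then yields $u\in W^{2,p}(K)$, and since $K\subset X\setminus\Sigma$ was arbitrary we conclude $u\in W^{2,p}_{loc}(X\setminus\Sigma)$. Running this for each $p$ with the \emph{same} $\Sigma$ gives the stated conclusion.

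The main obstacle, which I expect to be Proposition~\ref{p1} and its deployment, is obtaining the $W^{2,p}$ bound with constants that are uniform as $x_0$ ranges over $K$, since a priori the quality of the normalization (eccentricity of sections, smallness of the perturbation $D_{xx}c$ relative to the section size, modulus of continuity of the densities on the relevant scale) could degrade. The way to handle this is to exploit that, on the fixed compact set $K'\subset X\setminus\Sigma$, all these quantities are controlled \emph{uniformly} --- this is precisely where the $C^{1,\alpha}$ partial regularity of \cite{DF} is used as a black box, because it guarantees a uniform modulus for the ``good'' behavior of sections on compact subsets away from $\Sigma$. A secondary technical point is that \eqref{eq:MA T} is not the classical Monge--Amp\`ere equation: one must absorb the term $D_{xx}c(x,\cexp_x(\nabla u(x)))$ and the factor $|\det D_{xy}c|$, and verify that after normalization these are $C^0$-close to the model (quadratic cost) case uniformly over $K'$, so that the perturbative De Philippis--Figalli / Caffarelli-type $W^{2,p}$ machinery (adapted in Proposition~\ref{p1}) applies. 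Once Proposition~\ref{p1} is in hand with these uniform constants, the covering argument is routine.
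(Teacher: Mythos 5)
Your overall skeleton is the paper's: use De Philippis--Figalli to produce the closed null set $\Sigma$ and the $C^{1,\alpha}$ regularity away from it, then on a section $S_h(x_0)$ with $x_0\notin\Sigma$ normalize, reduce to the perturbative setting, and invoke Proposition~\ref{p1}. But two points that you fold into ``routine deployment'' are in fact where the real work happens, and as written the proposal does not close them.

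First, Proposition~\ref{p1} produces a threshold $\bar\delta=\bar\delta(p)$ that \emph{shrinks as $p$ grows}. When you say that $\Sigma$ is ``dictated purely by where $u$ fails to be $C^{1,\alpha}$'' and is therefore $p$-independent, you are asserting exactly what has to be proved. The paper handles this in Lemma~\ref{g4}: fixing $x_0\notin\Sigma$ and a target exponent $p$, one rescales the section $S_h(x_0)$ and exploits the \emph{continuity} of $f$, $g$, and of $D_{xx}c$, $D_{xy}c$, to make the rescaled densities and cost as close as one wishes to the model by taking $h=h(p)$ sufficiently small. It is this ``zoom until you beat $\bar\delta(p)$'' mechanism, and not merely the uniformity over $x_0$ that you emphasize, which produces a $\bar\delta$ independent of $p$ and hence a $p$-independent $\Sigma$; without it your covering argument would only yield $u\in W^{2,p}_{loc}(X\setminus\Sigma_p)$ with $\Sigma_p$ possibly depending on $p$. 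Second, Proposition~\ref{p1} requires the \emph{target} support $\mathcal{C}_2$ to be convex, and after normalizing $S_h(x_0)$ the image $T_{u_1}(\tilde S)$ is not convex; your sentence ``this is exactly the regime of Proposition~\ref{p1}'' is therefore not literally true. The paper's fix is to replace $\tilde S$ by its convex hull (using \cite[Lemma 3.2]{CF} to control the discrepancy), apply Proposition~\ref{p1} with the roles of $x$ and $y$ interchanged so that the convex set $[\tilde S]$ sits on the target side, obtain $u_1^c\in W^{2,p}$, and then transfer the bound to $u_1$ by duality (alternatively one may invoke Remark~\ref{rmk:other hyp}, which replaces the convexity hypothesis with closeness of the normalized potential to $\tfrac12|x|^2$). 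These two steps should be supplied to make your argument complete; otherwise the reduction to Proposition~\ref{p1}, and with it the theorem, does not follow.
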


By a localization argument, the above theorem yields the following:

\begin{corollary}\label{c1}
Let $(M, \mathcal{G})$ be a smooth closed Riemannian manifold,
and denote by $d$ the Riemannian distance induced by $\mathcal G$. Let $f$ and $g$ be two positive
continuous densities, and let $T$ be the optimal transport map for the
cost $c=\frac{d^2}{2}$ sending $f$ onto $g$. Then there exist
two closed sets $\Sigma_1,\Sigma_2\subset M$ of measure zero, such that $T:M\setminus \Sigma_1 \to M\setminus \Sigma_2$ is
a diffeomorphism of class $W^{1,p}_{loc}$ for any $p\geq 1.$ 
\end{corollary}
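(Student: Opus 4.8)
The plan is to deduce Corollary \ref{c1} from Theorem \ref{t1} by a covering argument, using the cut locus of $(M,\mathcal G)$ to reduce locally to the Euclidean setting and the partial regularity machinery of \cite{DF} to glue things together. Recall first (McCann; see \cite[Chapter 10]{V1}) that for $c=d^2/2$ on the compact manifold $(M,\mathcal G)$ and positive continuous densities $f,g$ there is a $c$-convex potential $u:M\to\R$ with $T(x)=\exp_x(\nabla u(x))$ for a.e.\ $x$, that the optimal map from $g$ to $f$ is given by $S(y)=\exp_y(\nabla u^c(y))$ for a.e.\ $y$, and that $S\circ T=\mathrm{id}$ a.e.\ and $T\circ S=\mathrm{id}$ a.e. A key point is that, for a.e.\ $x$, one has $T(x)\notin\mathrm{cut}(x)$: at a differentiability point $x$ of $u$ the (locally semiconvex) function $z\mapsto -d(z,T(x))^2/2$ touches $u$ from below at $x$, hence $\nabla u(x)$ lies in its superdifferential at $x$, so it is differentiable there and the minimizing geodesic from $T(x)$ to $x$ is unique; combining this with the fact that $u$ is twice differentiable a.e.\ with nondegenerate Monge--Amp\`ere measure (so $T(x)$ is not conjugate to $x$) gives $T(x)\notin\mathrm{cut}(x)$. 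This is classical and is also used in \cite{DF}.

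Next I localize. Fix a point $x_0$ with $y_0:=T(x_0)\notin\mathrm{cut}(x_0)$. Then $c=d^2/2$ is smooth on $\overline U\times\overline V$ for small balls $U\ni x_0$, $V\ni y_0$, and, read in normal coordinates, it satisfies $(\textbf{C0})$--$(\textbf{C3})$ on $U\times V$: $(\textbf{C1})$ and $(\textbf{C2})$ because $\exp$ restricts to a diffeomorphism off the cut locus, and $(\textbf{C3})$ because $D_{xy}(d^2/2)$ is, up to the metric identification, the (invertible) differential of $\exp^{-1}$. By the standard restriction property of $c$-convex functions used in \cite{DF}, after shrinking $U$ and choosing a small section $S_h(x_0)\subset U$ with $T(S_h(x_0))\subset V$, the function $u$ restricted to $\mathrm{int}\,S_h(x_0)$ coincides, up to an additive constant, with the potential of a genuine optimal transport problem between open subsets of $U$ and $V$, with positive continuous densities ($f$ and $g$) and cost $c|_{U\times V}$; hence Theorem \ref{t1} applies in the chart and produces, independently of $p$, a relatively closed negligible set outside of which $u\in W^{2,p}_{loc}$ near $x_0$. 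Let $\mathcal R_1$ be the open set of all points admitting such a neighborhood for every $p$, and put $\Sigma_1:=M\setminus\mathcal R_1$. Since $x_0\in\mathrm{int}\,S_h(x_0)$, the sets $\mathrm{int}\,S_h(x_0)$ over all $x_0$ with $T(x_0)\notin\mathrm{cut}(x_0)$ cover a set of full measure; extracting a countable subcover (Lindel\"{o}f) and using that each local singular set is negligible, we get $|\Sigma_1|=0$, while $\Sigma_1$ is closed by construction and $u\in W^{2,p}_{loc}(M\setminus\Sigma_1)$ for every $p\ge1$. Running the same argument for $u^c$ produces a closed negligible set $\Sigma_2$ with $u^c\in W^{2,p}_{loc}(M\setminus\Sigma_2)$.

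Finally I assemble the conclusion. Taking $p>n$ and using Sobolev embedding, $u\in C^{1}_{loc}(M\setminus\Sigma_1)$ and $u^c\in C^{1}_{loc}(M\setminus\Sigma_2)$, so the maps $x\mapsto\exp_x(\nabla u(x))=T(x)$ and $y\mapsto\exp_y(\nabla u^c(y))=S(y)$ are continuous on $M\setminus\Sigma_1$ and $M\setminus\Sigma_2$ respectively; since $M\setminus\Sigma_1$ consists of differentiability points of $u$ we have $T(x)\notin\mathrm{cut}(x)$ there, so $\exp$ is smooth at the relevant base point and $\nabla u\in W^{1,p}_{loc}$ gives $T\in W^{1,p}_{loc}(M\setminus\Sigma_1)$, and likewise $S\in W^{1,p}_{loc}(M\setminus\Sigma_2)$ for every $p\ge1$. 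As in \cite{DF}, the regular sets are in duality, so $T(M\setminus\Sigma_1)=M\setminus\Sigma_2$ (the image of a regular point of $u$ is a regular point of the dual potential $u^c$); since $S\circ T=\mathrm{id}$ on a full-measure subset of the open set $M\setminus\Sigma_1$ and both maps are continuous, $S\circ T=\mathrm{id}$ on all of $M\setminus\Sigma_1$, and symmetrically $T\circ S=\mathrm{id}$ on $M\setminus\Sigma_2$. Hence $T:M\setminus\Sigma_1\to M\setminus\Sigma_2$ is a homeomorphism with inverse $S$, and both maps are of class $W^{1,p}_{loc}$ for every $p\ge1$, which is the assertion. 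I expect the main obstacle to be exactly this localization: establishing $T(x)\notin\mathrm{cut}(x)$ for a.e.\ $x$ (so that one has a smooth local cost), the restriction property that turns $u$ into a bona fide local transport potential with positive continuous densities and cost satisfying $(\textbf{C0})$--$(\textbf{C3})$, and the duality $T(M\setminus\Sigma_1)=M\setminus\Sigma_2$ of the regular sets — all of which rely on the technology developed in \cite{DF}.
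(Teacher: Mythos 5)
Your argument is correct and is essentially what the paper intends: the paper's proof simply defers to \cite[Theorem 1.4]{DF}, and your write-up is a faithful reconstruction of that argument (localize away from the cut locus where $c=d^2/2$ is smooth and satisfies $(\textbf{C0})$--$(\textbf{C3})$, invoke the restriction property of $c$-convex functions to apply Theorem~\ref{t1} in charts, glue by a countable subcover, and use the duality between $u$ and $u^c$ to identify $T(M\setminus\Sigma_1)=M\setminus\Sigma_2$ and obtain the $W^{1,p}_{loc}$ inverse). In particular, you correctly emphasize that Theorem~\ref{t1} provides a $p$-independent singular set in each chart, which is what makes the global sets $\Sigma_1,\Sigma_2$ independent of $p$.
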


In the next result we show that if the cost function is sufficiently close to the ``quadratic'' cost $-x\cdot y$
(recall that this cost is equivalent to $\frac{1}{2}|x-y|^2$),
then the potential is $W^{2,p}$ up to the boundary.
Observe that the smallness parameter $\hat\delta$ is independent of $p$, and that this result is new even in the case $c(x,y)=-x\cdot y$.
 
\begin{theorem}\label{t3}
Suppose $X$ and $Y$ are two $C^2$ uniformly convex bounded domains in $\mathbb{R}^n$. Assume
$f:X\to \R^+$ and $g:Y\to \R^+$ are two continuous positive densities, and
let $u$ be the $c$-convex function associated to the optimal transport problem between $f$ and $g$ with cost 
$c(x,y)$. Suppose $c$ satisfies $(\textbf{C0})$-$(\textbf{C3})$ and 
\begin{equation}\label{902}
\|c+x\cdot y\|_{C^2(X\times Y)} \leq \delta.
\end{equation}
Then there exists $\hat \delta >0$,
depending only on $n$, the modulus of continuity of $f$ and $g$,
and the uniform convexity and $C^2$-smoothness of $X$, and $Y$, such that $u\in W^{2, p}(\overline{X})$ for any $p\geq 1$ provided $\delta \leq \hat \delta$. 
\end{theorem}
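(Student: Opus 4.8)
The plan is to combine three ingredients: the perturbative interior machinery behind Theorem \ref{t1} (in particular Proposition \ref{p1}), a Caffarelli-type argument showing that under \eqref{902} the potential is globally strictly $c$-convex and $C^1$ up to the boundary, and Savin's boundary localization and small-perturbation estimates from \cite{Savin}. For the first step, observe that since $\|c+x\cdot y\|_{C^2}\le\delta$ one has $\|D_{xx}c\|_{C^0}\le\delta$, $|\det D_{xy}c|\in[(1-\delta)^n,(1+\delta)^n]$, and $\cexp_x(p)=p+O(\delta)$, so that \eqref{eq:MA T} is a $\delta$-perturbation of the classical Monge--Amp\`ere equation $\det D^2u=f/g(\nabla u)$ on the uniformly convex domains $X,Y$. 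Adapting Caffarelli's argument (as was done for costs close to the quadratic one in \cite{CF}), one concludes that for $\delta\le\hat\delta$ the potential $u$ is strictly $c$-convex in $\overline X$, $T_u:\overline X\to\overline Y$ is a homeomorphism, and $u\in C^{1,\alpha}(\overline X)$; in particular the singular set $\Sigma$ of Theorem \ref{t1} is empty, so $u\in W^{2,p}_{loc}(X)$ already, and the new content is the estimate up to $\partial X$.

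The crucial geometric input is that, for $\delta\le\hat\delta$ and $h$ below a threshold depending only on the data, every section $S_h(x_0)$ with $x_0\in\overline X$ is \emph{balanced}: there is a linear map $A_{x_0,h}$, with $\det A_{x_0,h}$ bounded above and below by constants depending only on $n$, the moduli of continuity of $f,g$, and the convexity and $C^2$-regularity of $X,Y$, such that $B_{c_0}\subset A_{x_0,h}\bigl(S_h(x_0)-x_0\bigr)\subset B_{C_0}$. For interior $x_0$ this follows from John's lemma and the Alexandrov-type estimates for \eqref{eq:MA T}; for $x_0\in\partial X$ it is Savin's boundary localization theorem \cite{Savin}, whose hypotheses hold because $X$ and $Y$ are $C^2$ and uniformly convex, $T_u$ maps $\partial X$ onto $\partial Y$, and the right-hand side of \eqref{eq:MA T} is pinched between positive constants; the lower-order term $D_{xx}c\bigl(x,\cexp_x(\nabla u(x))\bigr)$, having $C^0$ norm $\le\delta$, is absorbed in Savin's argument for $\hat\delta$ small.

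On a balanced section $S_h(x_0)$ one then rescales $u$ by $A_{x_0,h}$, divides by $h$, and subtracts the supporting $c$-affine function; the resulting $\tilde u$ solves a Monge--Amp\`ere--type equation on a normalized (half-)domain whose right-hand side, by continuity of $f,g$ and by choosing $h$ small, is within $\e$ of a positive constant in $L^\infty$, up to a further error $\le\delta$ from the cost term. Thus $\tilde u$ is an $(\e+\delta)$-perturbation --- in the sense of Proposition \ref{p1} in the interior and of \cite{Savin} at the boundary --- of a solution of $\det D^2w=\mathrm{const}$ on a normalized domain, and the corresponding small-perturbation estimates give $\|D^2\tilde u\|_{L^p}$ bounds on a smaller normalized section, with constants independent of $x_0$ and $h$ (and with $\hat\delta$ independent of $p$). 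Undoing the rescaling yields $\frac{1}{|S_{h/2}(x_0)|}\int_{S_{h/2}(x_0)}\|D^2u\|^p\,dx\le C$ for all $x_0\in\overline X$, and a Vitali/Besicovitch covering of $\overline X$ by such sections --- legitimate since they have uniformly bounded eccentricity and satisfy the engulfing property --- upgrades this to $\|D^2u\|_{L^p(X)}\le C$; together with $u\in C^{1,\alpha}(\overline X)$ this gives $u\in W^{2,p}(\overline X)$.

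The delicate step is the second one: transferring Savin's boundary localization and pointwise estimates, established for the standard Monge--Amp\`ere equation, to the $c$-Monge--Amp\`ere equation \eqref{eq:MA T}, i.e.\ verifying that the geometry of $c$-sections at boundary points is still governed by the uniform convexity of $X$ and $Y$ once $\delta$ is small and that the cost perturbation does not unbalance them. This is where \eqref{902} is used quantitatively, and the compactness/stability arguments must be run so that all constants --- in particular the threshold $\hat\delta$ --- depend only on $n$, the moduli of continuity of $f,g$, and the convexity and $C^2$-smoothness of $X,Y$, but not on $p$.
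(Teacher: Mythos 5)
Your overall scaffolding --- obtain $C^{1,\alpha}(\overline X)$ for $\delta$ small, cover a boundary layer by sections centered at points near $\partial X$, apply interior small-perturbation $W^{2,p}$ estimates on each rescaled section, and sum via a Vitali argument --- matches the strategy of the paper. But the key technical input you propose is genuinely different from, and substantially stronger than, what the paper actually uses, and this is precisely where your argument would become problematic.

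You assume that every section $S_h(x_0)$ with $x_0 \in \overline X$ is \emph{balanced}, i.e.\ normalizable by a linear map with determinant and eccentricity both controlled by universal constants, and you propose to prove this at the boundary by transferring Savin's boundary localization theorem to the $c$-Monge--Amp\`ere equation \eqref{eq:MA T}. That localization theorem is a delicate, separate result (it appears in a different paper of Savin than the one cited here; \cite{Savin} is the global $W^{2,p}$ paper and contains only the covering/summation argument), its proof depends crucially on the structure of the pure Monge--Amp\`ere operator, and adapting it to the $c$-Monge--Amp\`ere equation is a nontrivial project that the paper deliberately avoids. In fact, the paper never claims balanced boundary sections. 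Instead, it works with maximal interior sections $S_{\bar h(x)}(x)$ where $\bar h(x) := \max\{h : S_h(x) \subset X\}$, and only uses the weaker eccentricity bound $\|A\|, \|A^{-1}\| \le \bar h(x)^{-\theta}$ inherited from the interior perturbation theory of \cite{DF}, with $\theta$ small (but not zero) when $\delta$ is small. After rescaling and applying Proposition \ref{p1}, each maximal section contributes
\[
\int_{S_{t\bar h(x_0)}(x_0)} \|D^2 u\|^p \le C \, \bar h(x_0)^{-2p\theta}\, |S_{\sigma \bar h(x_0)}(x_0)|,
\]
where the $\bar h^{-2p\theta}$ factor is exactly the polynomial loss from the unbalanced eccentricity. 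The point the paper exploits --- and which your proposal skips --- is that this loss is compensated by the thinness of the boundary layer: because $S_{\bar h(x)}(x) \subset X_{C\bar h(x)^{1/2-\theta}}$ and $|X_r| \approx r$, summing over the dyadic families $\mathcal{F}_k = \{S_{\bar h(x_i)}(x_i) : \bar h(x_i) \approx 2^{-k}h_0\}$ of disjoint (after shrinking) sections gives a geometric series $\sum_k C 2^{-k(\frac12 - 3p\theta)}$, convergent once $\theta$ is small enough that $3p\theta \le \frac14$.

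So the gap is concrete: you have replaced the paper's lightweight mechanism (polynomial eccentricity growth, paid for by the measure of the boundary layer) with a heavyweight claim (uniform balancing of boundary sections, \`a la Savin's boundary localization) that is not established for $c$-costs and that, if it were established, would be a significant result in its own right. You even flag the transfer of Savin's boundary localization as the delicate step --- and indeed it is so delicate that the paper's contribution is in part to show it is \emph{not needed}. Your proposal is thus not a proof but a reduction to a hard open lemma, whereas the paper's argument closes by a rescaling-plus-dyadic-summation that works under the much weaker $\bar h^{-\theta}$-type bounds one actually has from the perturbation theory.
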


The proof of above results is based on the following proposition.

\begin{proposition}\label{p1}
Let $f$ and $g$ be two densities supported in $B_{1/K}\subset\mathcal{C}_1\subset B_K$ and $B_{1/K}\subset \mathcal{C}_2\subset B_K$, respectively. Suppose that $\mathcal{C}_2$ is convex, 
\begin{equation}\label{402} 
\|f-\textbf{1}\|_{L^\infty(\mathcal C_1)}+\|g-\textbf{1}\|_{L^\infty(\mathcal C_2)}\leq \delta,
\end{equation}
and
\begin{equation}\label{403}
\|c(x,y)+x\cdot y\|_{C^2(B_K \times B_K)}\leq \delta.
\end{equation}
Then, for any $p\geq 1$ there exists $\bar \delta >0$, depending only on $n$, $K$, and $p$, such that $u\in W^{2,p}(B_{\frac{1}{4K}})$  provided $\delta \leq \bar \delta$.
\end{proposition}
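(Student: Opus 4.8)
The plan is to run a perturbative argument around the quadratic cost, using the fact that when $\delta$ is small the optimal map is a small $C^0$-perturbation of the identity and the potential $u$ is a small $C^2$-perturbation of $|x|^2/2$. First I would use the stability theory for optimal transport (as in \cite{DF,CF}): since $\|c+x\cdot y\|_{C^2}\le\delta$ and the densities are within $\delta$ of $\textbf 1$, the $c$-convex potential $u$, after subtracting an affine function, is $C^1$-close to $|x|^2/2$ on $B_{1/(2K)}$, and the sections $S_h(x_0)$ are comparable to Euclidean balls of radius $\sqrt h$ (round sections, no long thin pancakes), uniformly for $x_0\in B_{1/(4K)}$ and $h$ small. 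This is where the convexity of $\mathcal C_2$ enters: it guarantees the image of a section is a genuine ``convex-like'' set so that Caffarelli-type engulfing and the Alexandrov estimates are available. In this regime $u$ solves the Monge--Amp\`ere type equation \eqref{eq:MA T} with right-hand side within $\delta$ (plus the modulus of continuity) of $1$, and the operator $D^2u+D_{xx}c$ is a small perturbation of $D^2u$.

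The core of the proof is then a $W^{2,p}$ estimate via the approximation/compactness scheme of Caffarelli (in the form used by De Philippis--Figalli and Wang): one shows that on each small section $S_h(x_0)$ the potential $u$ can be approximated in $C^1$ (after affine renormalization that sends $S_h$ to a set comparable to $B_1$) by a solution $w$ of the constant-coefficient Monge--Amp\`ere equation $\det D^2w = 1$ with the same boundary data, hence by a classical Pogorelov/Caffarelli interior $C^{2,\alpha}$ estimate $w$ has a bounded Hessian on a smaller section; the difference $u-w$ is controlled by $\delta$ plus the oscillation of the densities, so the ``good set'' where $D^2u$ is close to that of $w$ has density close to one at every scale. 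Iterating this across dyadic scales of sections and using the engulfing property of sections (which holds because the sections are round) gives a power-decay estimate for the distribution function of $\|D^2u\|$, i.e. $|\{\|D^2u\|>\lambda\}\cap B_{1/(4K)}|\le C\lambda^{-\epsilon_0}$ for a universal $\epsilon_0>0$ when $\delta$ is small. The exponent $\epsilon_0$ is universal, not $p$; to reach arbitrary $p$ one chooses $\delta=\bar\delta(n,K,p)$ small enough that the decay rate $\epsilon_0=\epsilon_0(\delta)$ in the iteration can be taken larger than $p$ — this is the standard mechanism by which smallness of the perturbation upgrades the integrability exponent, and it is exactly why $\bar\delta$ must depend on $p$.

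Concretely the steps are: (1) affine normalization and the stability estimate giving round sections and RHS close to $1$; (2) a ``one-step'' lemma: on a round section $S$, $\|u-w_S\|_{L^\infty(S)}\le C(\delta+\omega(\mathrm{diam}\,S))(\mathrm{diam}\,S)^2$ where $w_S$ solves $\det D^2 w_S=\fint_S(\text{RHS})$ in $S$ with $w_S=u$ on $\partial S$, obtained by comparison principle for the Monge--Amp\`ere type operator and the perturbation bounds \eqref{402}--\eqref{403}; (3) interior $C^{2,\alpha}$ for $w_S$ (Caffarelli) to produce a paraboloid touching $u$ from above/below at most points of a sub-section, quantitatively; (4) a covering/iteration argument over dyadic sections, using engulfing, to get the distribution bound; (5) conclude $\|D^2u\|_{L^p(B_{1/(4K)})}\le C(n,K,p)$ for $\delta\le\bar\delta(n,K,p)$.

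The main obstacle I expect is step (2)-(3) carried out uniformly over \emph{all} small sections simultaneously: one needs the sections to stay uniformly round and the renormalized equations to stay uniformly close to $\det D^2w=1$ as the scale $h\to0$, which requires that the $C^2$-closeness of $c$ to $-x\cdot y$ is not destroyed under the affine renormalizations (the normalizing maps have bounded eccentricity precisely because sections are round, so the rescaled cost remains within $C\delta$ of a quadratic). Handling the $c$-exponential map and the dependence of the RHS of \eqref{eq:MA T} on $\nabla u$ under these rescalings — ensuring the ``frozen-coefficient'' approximation error is genuinely of order $\delta+\omega$ and not merely $o(1)$ — is the delicate technical point; once that uniformity is in place, the integrability bootstrap is the classical Caffarelli--Wang argument.
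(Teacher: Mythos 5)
Your high-level plan is the same as the paper's: use the $\epsilon$-regularity/stability theory of \cite{DF,CF} to get round sections under affine renormalization, prove a one-scale density estimate showing the good set (bounded Hessian) has density $1-O(\delta^{1/2})$ in every section, iterate via engulfing and a Vitali covering of sections across dyadic scales, and observe that the per-scale decay factor $\sim N\delta^{1/2}$ can be made to beat any fixed power of $M$ by shrinking $\delta$ --- which is exactly why $\bar\delta$ depends on $p$. You have correctly identified the architecture and, in particular, the mechanism upgrading the integrability exponent. Two points in the execution, however, differ from what the paper actually does and are genuinely problematic as you have stated them.

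First, the ``one-step lemma'' in your step (2) is not what the paper proves, and I do not think it is provable as stated. You propose a quantitative comparison $\|u-w_S\|_{L^\infty(S)}\le C(\delta+\omega(\operatorname{diam}S))(\operatorname{diam}S)^2$ against a \emph{Dirichlet} solution of $\det D^2w_S=\fint_S(\text{RHS})$ with $w_S=u$ on $\partial S$, obtained ``by comparison principle for the Monge--Amp\`ere type operator.'' For the general-cost equation \eqref{eq:MA T} there is no clean comparison principle of this kind: the operator contains the zeroth-order coupling $D_{xx}c(x,\cexp_x(\nabla u))$ and a priori no Hessian bound to control it. Also, Proposition \ref{p1} assumes only $L^\infty$-closeness of $f,g$ to $\mathbf 1$, not continuity, so the modulus $\omega(\operatorname{diam}S)$ does not enter. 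What the paper does instead is approximate the rescaled $\bar u$ by the Brenier potential $w$ of the \emph{quadratic-cost} optimal transport problem between $\mathbf 1_{X_1}$ and $\mathbf 1_{\rho Y_1}$, and prove $\|\bar u-w\|_{L^\infty}\le\gamma$ for arbitrary $\gamma$ by a compactness/stability argument (nonquantitative, but sufficient because $\gamma$ is then fixed once and for all); Caffarelli's convexity theorem \cite{C92} then gives the universal $C^3$ bound on $w$, and this is where the convexity of $\mathcal C_2$ enters --- it is the convexity of the \emph{target} that makes the comparison Brenier potential smooth, not (as you suggest) a convexity property of images of sections.

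Second, you do not address the regularization needed to make the pointwise Hessian estimates meaningful. The density estimate (the paper's Lemmas \ref{den1}--\ref{den3}, which use the convex envelope $\Gamma(\bar u-w/2)$ and the Area Formula for $\det D^2$) requires one to evaluate $D^2\bar u$ at points of the contact set. The paper devotes Section \ref{sect:approx} to approximating $f,g$ by smooth $f_\epsilon,g_\epsilon$, proving that the corresponding potentials $u_\epsilon$ are $C^2$ via \cite[Theorem 2.3]{CF1}, establishing uniform-in-$\epsilon$ $W^{2,p}$ estimates for $u_\epsilon$, and passing to the limit --- and explicitly flags this as nontrivial. Your proposal silently assumes paraboloid-touching at ``most points'' is available for $u$ directly; for the classical Monge--Amp\`ere equation one can appeal to Alexandrov second derivatives, but for the general-cost equation this is not standard, which is precisely why the paper builds the regularization step rather than skipping it. Filling these two gaps essentially reproduces Sections 3.2--3.3 of the paper.
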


Note that, in the result above, the smallness of the parameter $\delta$ depends on $p$. So, for the proof of Theorems \ref{t1} and \ref{t3} and Corollary \ref{c1}, it will be crucial to prove that actually $\delta$
can be chosen independently of $p$ (see Lemma \ref{g4}).
Also, as explained in Section \ref{sect:approx} below,
to prove Proposition \ref{p1} we shall first approximate $u$ with smooth solutions and then obtain $W^{2,p}$ a priori estimates that are independent of the regularization. We note that, in this context, such a regularization procedure is nontrivial and require some attention.

\begin{remark}
\label{rmk:other hyp}
As we shall also observe later,
the condition ``$\mathcal{C}_2$ is convex" in Proposition \ref{p1} can be replaced by the assumption
$$\biggl\|u-\frac{1}{2}|x|^2\biggr\|_{L^\infty(B_{\eta_0})}\leq \delta$$ for some fixed $\eta_0\leq 1/K.$
Under this assumption, for any $p\geq 1$ there exists $\bar \delta >0$, depending only on $n$, $K$, $\eta_0$, and $p$, such that $u\in W^{2,p}(B_{\frac{1}{2}\eta_0})$  provided $\delta \leq \bar \delta$.
Moreover, in the above condition, the function $\frac{1}{2}|x|^2$ can be replaced by a $C^2$ convex function $v$
such that $\frac1M{\rm Id}\leq D^2v \leq M{\rm Id}$,
in which case $\bar\delta$ depends also on $M$
and the modulus of continuity of $D^2 v$ .
\end{remark}

\section{Proof of Proposition \ref{p1}}
We begin by observing that, under our assumptions,
it follows by \cite[Theorem 4.3]{DF}
and the argument in the proof of \cite[Theorem 2.1]{CF}
that $u \in C^{1,\alpha}(B_{\frac{1}{2K}})$ for some $\alpha>0$.
Hence, up to replace $K$ by $2K$, we can assume that $u \in C^{1,\alpha}(B_{1/K})$.
In particular it follows by \eqref{eq:u diff} that 
$S(x_0,y_0,u,h)=S(x_0,T_u(x_0),u,h)$, and we can use the notation
$S_h(x_0)=S(x_0,T_u(x_0),u,h)$.

\subsection{Engulfing property of sections}
The first step consists in establish the engulfing property for sections of $u$, which is stated as the following lemma. 

\begin{lemma}[Engulfing property]\label{eng}
There exist universal constants $r_0>0$ and $C>1$ such that,
for $h\leq r_0$  and $x_0\in B_{\frac{2}{3K}},$
$$
x_1\in S_h(x_0)\qquad \Longrightarrow\qquad S_h(x_0)\subset S_{Ch}(x_1).$$ 
\end{lemma}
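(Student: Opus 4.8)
The strategy is to transfer the classical engulfing property for sections of convex functions (due to Caffarelli–Gutiérrez) to the present $c$-convex setting, exploiting that the cost is $C^2$-close to $-x\cdot y$ and that, by the preliminary remark, $u\in C^{1,\alpha}(B_{1/K})$. First I would reduce to a normalized situation: after the affine-type change of variables natural for the cost $-x\cdot y$ (and absorbing the $O(\delta)$ error coming from \eqref{403}), the function $u$ is comparable to $\frac12|x|^2$ near $x_0$, and the Monge–Ampère equation \eqref{eq:MA T} becomes, up to controllably small perturbations, the standard equation $\det D^2 u = F$ with $F$ bounded above and below by constants close to $1$ (using \eqref{402}). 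The sections $S_h(x_0)$ are then, up to the same small errors, sublevel sets of a genuine convex solution of a Monge–Ampère equation with right-hand side pinched between two positive constants; for such solutions the engulfing property is known with universal constants. The point is that all the perturbative errors are uniformly small once $\delta\le\bar\delta$, so the universal constants $r_0,C$ can be taken independent of the particular $u$.

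Concretely, the key steps, in order, are: (i) fix $x_0\in B_{2/(3K)}$, set $y_0=T_u(x_0)$, and write $\ell_{x_0}(x):=-c(x,y_0)+c(x_0,y_0)+u(x_0)$ for the "$c$-support" function, so that $S_h(x_0)=\{u\le \ell_{x_0}+h\}$ and $u-\ell_{x_0}\ge 0$ touches $0$ at $x_0$; (ii) observe that $w:=u-\ell_{x_0}$ is, modulo a $C^2$-error of size $O(\delta)$, a nonnegative convex function vanishing at $x_0$ and solving a Monge–Ampère equation $\lambda\le\det D^2 w\le\Lambda$ with $\lambda,\Lambda$ close to $1$, because both $D_{xx}c$ and $D_{xy}c+\mathrm{Id}$ are $O(\delta)$ in $C^0$ and the densities are within $\delta$ of $1$; (iii) invoke the engulfing property for sections of such solutions (Gutiérrez–Huang, or Caffarelli–Gutiérrez) in the normalized form: there exist $r_0,C$ depending only on $n,\lambda,\Lambda$ such that for $h\le r_0$, if $x_1\in S_h^w(x_0)$ then $S_h^w(x_0)\subset S_{Ch}^w(x_1)$; (iv) translate this back to the $c$-sections $S_h(x_0)$, checking that the small perturbation of the support functions $\ell_{x_0}$ versus $\ell_{x_1}$ (the difference between the $c$-support at $x_1$ and the affine support at $x_1$) only changes $h$ by a multiplicative universal factor, which can be absorbed into $C$ at the cost of shrinking $r_0$.

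The main obstacle I expect is step (iv): in the convex case the support function at $x_1$ is the affine tangent plane $u(x_1)+\nabla u(x_1)\cdot(x-x_1)$, whereas here the relevant section $S_h(x_1)$ is built from the $c$-support $-c(x,y_1)+c(x_1,y_1)+u(x_1)$ with $y_1=T_u(x_1)$, and one must show that, uniformly over $x_0,x_1$ in the relevant range and for all small $h$, these two notions of section are comparable (one contains a fixed dilate of the other). This uses that $\|D_{xx}c\|_{C^0}=O(\delta)$ so that $c(x,y_1)$ differs from its tangent plane in $x$ by $O(\delta)|x-x_1|^2$, together with the fact, also needed, that $\mathrm{diam}\,S_h(x_1)\to 0$ as $h\to 0$ uniformly — which again follows from the strict convexity/nondegeneracy of $w$ coming from the pinched Monge–Ampère equation and the $C^{1,\alpha}$ regularity of $u$. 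Once this comparison of the two families of sections is in hand with universal constants, the engulfing property for $c$-sections follows from the one for the perturbed convex Monge–Ampère solution, with $r_0$ and $C$ universal as claimed.
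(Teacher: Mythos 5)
Your plan (normalize, compare with the convex Monge--Amp\`ere picture, transfer the classical engulfing property of Guti\'errez--Huang) is a reasonable alternative route, and it differs from the paper's argument, which does \emph{not} invoke the classical engulfing property as a black box. However, as written, step~(iv) has a genuine quantitative gap, and it is exactly the step that the paper's computation is designed to circumvent.

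The problem is the size of the error you absorb when replacing the $c$-support $-c(\cdot,y_1)+c(x_1,y_1)+u(x_1)$ at a point $x_1\in S_h(x_0)$ by the affine tangent $u(x_1)+\nabla u(x_1)\cdot(\cdot-x_1)$. You bound this error by $O(\delta)|x-x_1|^2$ using $\|D_{xx}c\|_{C^0}=O(\delta)$, and you note that $\mathrm{diam}\,S_h\to 0$. But the only quantitative diameter control available at this stage is $\mathrm{diam}\,S_h\le Ch^{1/2-\theta}$ for some small but \emph{positive} $\theta$ (this is what the $C^{1,\alpha}$ regularity and the normalization \eqref{a3} give). Plugging this in yields an error of order $\delta\,h^{1-2\theta}$, which dominates $h$ as $h\to 0$ for any fixed $\delta>0$. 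Absorbing it into $Ch$ would require $\delta\lesssim h^{2\theta}$, a condition that degenerates as $h\to 0$ and is therefore incompatible with the statement, which asks for a single universal $r_0$ (valid for \emph{all} $h\le r_0$) and a $\delta$-threshold independent of $h$.

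The paper sidesteps this in two essential ways. First, after the John-type affine normalization $A$ with $\|A\|,\|A^{-1}\|\le h^{-\theta}$, both $S_h^1$ and $T_{u_1}(S_h^1)$ become comparable to balls of radius $\sqrt{h}$, so that in the normalized coordinates $|x-x_1|\le 6\sqrt{h}$ and $|y_1|=|y_1-y_0|\le 3\sqrt{h}$ (see \eqref{a5}, \eqref{a6}). Second --- and this is the decisive point --- the paper estimates the \emph{mixed} double difference
$$c_1(x,y_1)-c_1(x_1,y_1)+c_1(x_1,0)-c_1(x,0)=O\bigl(\|D_{xy}c_1\|\,|x-x_1|\,|y_1|\bigr),$$
and not the pure second-order Taylor remainder $O(\|D_{xx}c_1\|\,|x-x_1|^2)$. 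After the normalization one has the universal bound $\|D_{xy}c_1\|\le C$ (whereas $\|D_{xx}c_1\|$ is only $O(h^{1/2-3\theta})$, not $O(\delta)$), so the product $|x-x_1|\,|y_1|\le Ch$ yields an error $\le C_1 h$ with a universal constant, uniformly in $h$ and $\delta$. The engulfing then follows by the triangle inequality directly, with no appeal to the classical engulfing result.

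If you want to keep your comparison between $c$-sections and affine sections, the gap can be repaired by replacing the crude bound $\|D_{xx}c\|=O(\delta)$ by the refined estimate $\|D_{xx}c(\cdot,y_1)\|\le C|y_1-y_0|$, which follows from the normalization $c(x,y_0)\equiv 0$ (so that $D_{xx}c(\cdot,y_0)\equiv 0$) together with the $C^3$ bound on $c$. Combined with $|y_1-y_0|\le Ch^{1/2-\theta}$ and $|x-x_1|^2\le Ch^{1-2\theta}$, this gives an error of order $h^{3/2-3\theta}=o(h)$, which is acceptable. But at that point you are effectively re-deriving the paper's mixed $D_{xy}$ estimate; relying on the $\delta$-smallness of $D_{xx}c$ alone, as you propose, is not enough.
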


\begin{proof}
Without loss of generality we may assume $x_0=0, y_0=T_u(x_0)=0$,
and $u(0)=0$. Up to performing the transformations
$$c(x,y)\mapsto \tilde{c}(x,y):=c(x,y)-c(x,0)-c(0,y)+c(0,0),\qquad
u(x)\mapsto \tilde u(x):=u(x)+c(x,0),$$
we may assume 
\begin{equation}\label{a4}
c(x,y)=-x\cdot y+O(|x|^2|y|+|x||y|^2).
\end{equation}
Set $\rho:=\left(\frac{|\mathcal{C}_1|}{|\mathcal{C}_2|}\right)^{1/n}$ so that $|\rho\mathcal C_2|=|\mathcal C_1|$,
and 
let $v$ be a convex function satisfying $(\nabla v)_\sharp\textbf{1}_{\mathcal{C}_1}=\textbf{1}_{\rho\mathcal{C}_2}$ with $v(0)=0$ (we note that $\nabla v$ is the optimal transport map from $\frac{1}{|\mathcal C_1|}\textbf{1}_{\mathcal{C}_1}$
to $\frac{1}{|\mathcal C_2|}\textbf{1}_{\mathcal{C}_2}$
for the quadratic cost). By a compactness argument similar to the proof of \cite[Lemma 4.1]{DF} we have 
\begin{equation}\label{int}
\|u-v\|_{L^\infty(B_{\frac{1}{K}})}\leq \omega(\delta),
\end{equation}
 where
$\omega:\R^+\rightarrow \R^+$ satisfies $\omega(r)\rightarrow 0$ as $r\rightarrow 0.$ Also, since $\rho\mathcal{C}_2$ is convex,
it follows by \cite{C92} that $v$ is smooth
and uniformly convex in $B_{\frac{3}{4K}}.$\\

Thanks to  \eqref{402}, \eqref{403}, and \eqref{int}, we can follow the proof of \cite[Theorem 4.3]{DF} to show that, for small $h$, there exists an affine transform $A$ such that  
\begin{equation}\label{a1}
A(B_{\frac{1}{3}\sqrt{h}})\subset S_h\subset A(B_{3\sqrt{h}}),
\end{equation}
\begin{equation}\label{a2}
A'^{-1}(B_{\frac{1}{3}\sqrt{h}})\subset T_u(S_h)\subset A'^{-1}(B_{3\sqrt{h}})
\end{equation}
and
\begin{equation}\label{a8}
\biggl|u(Ax)-\frac{1}{2}|x|^2\biggr|\leq \eta h \quad \text{in $B_{3\sqrt{h}}$}
\end{equation}
with 
\begin{equation}\label{a3}
\|A\|, \|A^{-1}\|\leq h^{-\theta}, 
\end{equation}
where $\eta, \theta>0$ can be as small as we want, provided $\delta$ is sufficiently small.
Note that \eqref{int} plays the same role as the fact that $u$ is close to a quadratic function, which is used in the proof of \cite[Theorem 4.3]{DF}.
Furthermore, \eqref{a2} and \eqref{a3} imply that $\text{diam}(S_h(x))\leq Ch^{\frac{1}{2}-\theta}.$ 
Hence, if we choose $r_0$ small enough so that $Cr_0^{\frac{1}{2}-\theta}\leq \frac{1}{4K}$,
we see that for $h\leq r_0$ and $x\in B_{\frac{2}{3K}}$ we have $S_h(x)\subset B_{\frac{3}{4K}}.$\\


Now we perform the transformations $c_1(x,y):=c(Ax, A'^{-1}y)$
and $u_1(x):=u(Ax)$, and we use the notation 
$S_h^1=S(0,0, u_1, h)$.
By \eqref{a1} and \eqref{a2} we have
\begin{equation}\label{a5}
B_{\frac{1}{3}\sqrt{h}}\subset S_h^1\subset B_{3\sqrt{h}}
\end{equation}
and
\begin{equation}\label{a6}
B_{\frac{1}{3}\sqrt{h}}\subset T_{u_1}(S_h^1)\subset B_{3\sqrt{h}}.
\end{equation}
Note that 
\begin{equation}
\label{eq:x section}
0\leq u_1(x)+c_1(x,0)-c_1(0,0)-u_1(0)\leq h
\qquad \text{for any $x\in S_h^1$.}
\end{equation} 
Also,  by \eqref{a4} and \eqref{a3} we see that  $\|c_1\|_{C^2(B_{3\sqrt{h}}\times B_{3\sqrt{h}})} \leq C$ for some universal constant $C$. Therefore, thanks to \eqref{a6}
and \eqref{a7},
for any $x,x_1 \in S_h^1$ and $y_1 =T_{u_1}(x_1) \in T_{u_1}(S_h^1)$,
\begin{eqnarray*}
|c_1(x, y_1)-c_1(x_1, y_1) + c_1(x_1, 0) -c_1(x, 0)| \leq \|D_{xy}c_1\|_{C^0(B_{3\sqrt{h}}\times B_{3\sqrt{h}})}|x_1-x|\,|y_1| \leq C_1h
\end{eqnarray*}
for some universal constant $C_1$.
 Hence, by \eqref{eq:x section} applied to both $x$ and $x_1$ we get
\begin{eqnarray*}
u_1(x)+c_1(x,y_1)-c_1(x_1,y_1)-u_1(x_1)
&=&u_1(x)+c_1(x,0)-c_1(0,0)-u_1(0)\\
&&-(u_1(x_1)+c_1(x_1,0)-c_1(0,0)-u_1(0))\\
&&+ c_1(x, y_1) - c_1(x_1, y_1) + c_1(x_1, 0) -c_1(x, 0)\\
&\leq& h+C_1h.
\end{eqnarray*}
Since $x \in S_h^1=S(0,0,u_1,h)$ was arbitrary,
this proves that $S(0,0,u_1,h) \subset S(x_1,y_1,u_1, (1+C_1)h).$
Recalling the relation between $u_1$ and $u$, this proves the desired result. 
\end{proof}

As a consequence of this result, one gets the following:
\begin{corollary}
\label{cor:engulf}
There exists a small constant $r_1$ such that
for $h\leq r_1$  and $x,y\in B_{\frac{1}{2K}}$ the following holds:
suppose $S_h(x)\cap S_t(y)\ne \emptyset,$ $t\leq h.$
Then there exists an universal constant $C'$ such that 
$S_t(y)\subset S_{C'h}(x).$ 
\end{corollary}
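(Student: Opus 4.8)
The plan is to deduce Corollary~\ref{cor:engulf} from Lemma~\ref{eng} by a standard ``two-step engulfing'' argument, being careful that all sections involved remain inside the region where Lemma~\ref{eng} applies. First I would fix $r_1$ to be small enough that, for $h\le r_1$ and $x,y\in B_{\frac{1}{2K}}$, the sections $S_h(x)$ and $S_t(y)$ (with $t\le h$) are contained in $B_{\frac{2}{3K}}$; this is possible because of the diameter bound ${\rm diam}(S_h(x))\le Ch^{\frac12-\theta}$ established in the proof of Lemma~\ref{eng}, which in particular forces $r_1\le r_0$. Thus every centre point we encounter lies in $B_{\frac{2}{3K}}$ and every relevant section radius is $\le r_0$, so Lemma~\ref{eng} is applicable to all of them.

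Next, pick a point $z\in S_h(x)\cap S_t(y)$. Since $z\in S_t(y)$ and $t\le r_0$, and $y\in B_{\frac23 K}$ (after shrinking $r_1$), the engulfing property gives $S_t(y)\subset S_{Ct}(z)$; and since $t\le h$ this yields $S_t(y)\subset S_{Ch}(z)$ (sections are monotone increasing in the height parameter). On the other hand, $z\in S_h(x)$ with $h\le r_0$ and $x\in B_{\frac23 K}$, so Lemma~\ref{eng} gives $S_h(x)\subset S_{Ch}(z)$. To go back, I apply the engulfing property once more, now centred at $z$: we have $x\in S_h(x)\subset S_{Ch}(z)$, hence, provided $Ch\le r_0$ (which we arrange by taking $r_1\le r_0/C$) and $z\in B_{\frac23 K}$, the lemma gives $S_{Ch}(z)\subset S_{C^2h}(x)$. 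Chaining the two inclusions produces $S_t(y)\subset S_{Ch}(z)\subset S_{C^2h}(x)$, so the corollary holds with $C'=C^2$, a universal constant.

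The one technical point that needs a little care — and which I regard as the main (if modest) obstacle — is the bookkeeping of heights and domains: each application of Lemma~\ref{eng} multiplies the height by $C$, so one must fix $r_1$ small enough up front that even the inflated height $Ch$ stays below $r_0$ and even the inflated sections stay inside $B_{\frac23 K}$ (so that the second application, centred at $z$, is legitimate). This is guaranteed by the uniform estimate ${\rm diam}(S_\tau(w))\le C\tau^{\frac12-\theta}$ for $w\in B_{\frac23 K}$, $\tau\le r_0$, applied with $\tau = Ch$: choosing $r_1$ so that $C r_1 \le r_0$ and $C(Cr_1)^{\frac12-\theta}\le \frac{1}{6K}$ does the job. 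One also uses the elementary monotonicity $S_a(w)\subset S_b(w)$ for $a\le b$, which is immediate from the definition of sections. With these preliminaries in place the argument is purely formal, and the constant $C'$ produced is independent of $h$, $t$, $x$, and $y$, as claimed.
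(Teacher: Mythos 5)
Your proof is correct and follows exactly the same route as the paper: pick $z\in S_h(x)\cap S_t(y)$, use Lemma~\ref{eng} twice (once to absorb $S_t(y)$ and $S_h(x)$ into $S_{Ch}(z)$, and once more, recentring at $z$, to absorb $S_{Ch}(z)$ into $S_{C^2h}(x)$), and take $C'=C^2$. Your extra bookkeeping about shrinking $r_1$ so that $Ch\le r_0$ and all intermediate centres stay in $B_{\frac{2}{3K}}$ is precisely what the paper summarizes by ``for $h$ small enough,'' so the two arguments match in substance.
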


\begin{proof}
Fix $z\in S_h(x)\cap S_t(y)$. By Lemma \ref{eng} we have that
$$
S_t(y)\subset S_{Ch}(z)\qquad
\text{and}\qquad x\in S_h(x)\subset S_{Ch}(z)
$$
for some universal constant $C.$
Also, by the argument in the proof of Lemma \ref{eng},
$z \in B_{\frac{2}{3K}}$ for $h$ small enough.
Hence, using Lemma \ref{eng} again we have
$S_{Ch}(z)\subset S_{C^2h}(x),$
thus $S_t(y)\subset S_{C'h}(x)$ with $C':=C^2.$
\end{proof}

It is well known that the property of sections 
stated in Corollary \ref{cor:engulf} implies the following Vitali covering lemma (see for instance \cite[Lemma 4.6.2]{Fbook} for a proof):
\begin{lemma}[Vitali covering]\label{vital}
Under the assumptions of Proposition \ref{p1},  let $D$ be a compact subset of $B_{\frac{1}{2K}},$
and let $\{S_{h_x}(x)\}_{x\in D}$ be a family of sections with $h_x\leq r_1.$ Then, there exists a finite number of 
sections $\{S_{h_{x_i}}(x_i)\}_{i=1,\ldots, m}$ such that 
$$D\subset \bigcup_{i=1}^mS_{h_{x_i}}(x_i)$$
with $\{S_{\sigma h_{x_i}}(x_i)\}_{i=1,\ldots, m}$ disjoint, where $\sigma>0$ is a universal constant.
\end{lemma}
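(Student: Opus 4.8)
\textbf{Proof plan for Lemma \ref{vital}.}
The statement is a standard Vitali-type covering lemma adapted to the family of sections, so the plan is to run the usual greedy selection argument, using Corollary \ref{cor:engulf} as the substitute for the ``five times dilation'' step in the classical Euclidean Vitali lemma. First I would pass to a finite subcover: since $D$ is compact and covered by the open-ish sets $S_{h_x}(x)$ (each section contains a neighborhood of $x$ by the engulfing estimates \eqref{a1}), we may extract finitely many of them, say $S_{h_1}(z_1),\dots,S_{h_N}(z_N)$, still covering $D$. Then I would order these finitely many sections by decreasing height $h_i$, and greedily select: put $z_1$'s section in the collection, discard every section whose height is $\le h_1$ that meets it, among the remaining ones pick the one of largest height, and iterate. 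This produces a subfamily $\{S_{h_{x_i}}(x_i)\}_{i=1,\dots,m}$ which is pairwise disjoint at a fixed small scale $\sigma$ — the disjointness of $\{S_{\sigma h_{x_i}}(x_i)\}$ will come from the selection rule together with the fact that two selected sections of comparable height which were not discarded cannot overlap too much; this is where one needs the interior/exterior ball bounds \eqref{a1}, \eqref{a3} to convert ``sections are disjoint'' into ``shrunk sections are disjoint'' uniformly.

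The key point is then to check that the selected subfamily still covers $D$. Let $S_{h_x}(x)$ be any one of the original finitely many sections (so $x \in D$); it was either selected or discarded. If it was discarded, then by construction it met some selected section $S_{h_{x_i}}(x_i)$ with $h_{x_i}\ge h_x$. Now apply Corollary \ref{cor:engulf} with the roles $h = h_{x_i}$, $t = h_x$ (and recall $x,x_i \in B_{1/(2K)}$, $h_{x_i}\le r_1$): since $S_{h_{x_i}}(x_i)\cap S_{h_x}(x)\ne\emptyset$, we get $S_{h_x}(x)\subset S_{C'h_{x_i}}(x_i)$, so in particular $x\in S_{C'h_{x_i}}(x_i)$. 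Thus every point of $D$ lies in $C'$-dilate of a selected section; but the lemma as stated only asks for $D\subset\bigcup S_{h_{x_i}}(x_i)$ with the \emph{disjoint} family being the $\sigma$-shrunk one — so in fact one runs the argument so that the ``good'' selected sections are the $C'$-enlargements (relabel $h_{x_i}$ to absorb the constant $C'$, which is harmless up to requiring $C' r_1 \le r_1'$ for a slightly smaller threshold), and the disjoint shrunk family is at scale $\sigma$ relative to these. I would simply cite \cite[Lemma 4.6.2]{Fbook} for the bookkeeping, as the excerpt already suggests.

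The main obstacle — really the only nontrivial point — is the quantitative disjointness at scale $\sigma$: one must know that if two sections $S_{h}(x)$, $S_{h'}(x')$ with $h'\le h\le C h'$ are such that neither engulfs ``most'' of the other, then their $\sigma$-dilates are disjoint for a \emph{universal} $\sigma$ independent of the heights. This is exactly the content encoded in the normalized picture \eqref{a1}–\eqref{a8}: after the affine normalization $A$, a section of height $h$ is comparable to a Euclidean ball of radius $\sqrt h$, and the engulfing constant $C$ is universal, so the argument reduces to the classical Euclidean Vitali lemma for balls of comparable radii, for which $\sigma$ is explicit. Since all of these normalization facts are already established in the proof of Lemma \ref{eng} and its corollary, there is nothing genuinely new to prove here, which is why the reference to \cite{Fbook} suffices.
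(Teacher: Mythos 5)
The paper offers no proof here: it simply cites \cite[Lemma 4.6.2]{Fbook}, and your plan (compactness to get a finite subfamily, greedy selection by decreasing height, engulfing via Corollary \ref{cor:engulf} replacing the ``$5r$'' step) is indeed the standard route that citation points to. However, your greedy rule is not quite the right one for the statement as written. Discarding \emph{sections that meet} the selected one yields a pairwise \emph{disjoint} selected family $\{S_{h_{x_i}}(x_i)\}$ whose $C'$-dilates $\{S_{C'h_{x_i}}(x_i)\}$ cover $D$; that is a different conclusion, and your ``relabel $h_{x_i}\mapsto C'h_{x_i}$'' does not actually produce sections from the prescribed family $\{S_{h_x}(x)\}_{x\in D}$, which matters because in the application (Lemma \ref{f7}) the heights $h_{x_i}$ are chosen so that $\mathbf{a}(S_{h_{x_i}}(x_i))=NM^k$ and \eqref{f4} is invoked at exactly those scales. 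The clean fix is to invert the rule: at step $i$, discard every remaining \emph{point} $x$ with $x\in S_{h_{x_i}}(x_i)$. Covering is then immediate. Disjointness of the $\sigma$-shrinks is where the engulfing is genuinely used: if $z\in S_{\sigma h_{x_i}}(x_i)\cap S_{\sigma h_{x_j}}(x_j)$ with $i<j$, Lemma \ref{eng} gives $x_j\in S_{C\sigma h_{x_j}}(z)\subset S_{C\sigma h_{x_i}}(z)$ and $x_i\in S_{C\sigma h_{x_i}}(z)$, and one more application gives $x_j\in S_{C^2\sigma h_{x_i}}(x_i)\subset S_{h_{x_i}}(x_i)$ once $\sigma\le 1/C^2=1/C'$, contradicting that $x_j$ survived step $i$.

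Relatedly, your description of where the difficulty sits is misplaced. You say the key obstacle is ``converting `sections are disjoint' into `shrunk sections are disjoint' '' and that this needs the normalization \eqref{a1}, \eqref{a3}; but that implication is vacuous (subsets of disjoint sets are disjoint), and in the discard-points version the selected full sections are not even disjoint to begin with. The role of the normalization \eqref{a1}--\eqref{a3} is entirely internal to the proofs of Lemma \ref{eng} and Corollary \ref{cor:engulf}; once those are in hand, the covering lemma is pure combinatorics as above, with no further geometric input. So your overall approach is the right one and matches the cited reference, but the write-up has the selection rule and the locus of the engulfing argument swapped, and as stated it does not literally deliver the lemma's conclusion.
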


\subsection{Approximation argument}
\label{sect:approx}

In the next sections we will prove our $W^{2,p}$ estimates by controlling the measure of the super-level
sets of the Hessian of $u$. Because we shall need to use the pointwise value of $D^2u$,
we need an approximation argument in order to work with $C^2$ convex functions.
Since in this setting this is not a standard procedure, we now provide the details.

Given $u$ as in Proposition \ref{p1},
we set $\rho:=\left(\frac{|\mathcal{C}_1|}{|\mathcal{C}_2|}\right)^{1/n}$ so that $|\rho\mathcal C_2|=|\mathcal C_1|$,
and 
let $v$ be a convex function satisfying $(\nabla v)_\sharp\textbf{1}_{\mathcal{C}_1}=\textbf{1}_{\rho\mathcal{C}_2}$ with $v(0)=0$. Since 
$$
\|u-v\|_{L^\infty(B_{\frac{1}{K}})} \to 0\qquad \text{as $\delta \to 0$}
$$
(see \eqref{int}), 
as in the proof of Lemma \ref{eng}
we can choose $\delta$ small enough so that,
 for any $x \in B_{\frac{1}{2K}}$ and $h>0$ small but universal,
 the section $S_h(x)$ satisfies
 \eqref{a1}, \eqref{a2}, \eqref{a8}, and \eqref{a3}.
 
 We now consider $f_\epsilon:\mathcal C_1\to \R$ and $g_\epsilon:\mathcal C_2\to \R$ as sequence 
 of $C^\infty$ densities that approximate $f$ and $g$ respectively,
 and denote by $u_\epsilon$ the potential function
 for the optimal transport problem from $f_\epsilon$ to $g_\epsilon$ with cost $c$. Without loss of generality, we can assume that $u_\epsilon(0)=u(0)$.
 
 Then, by a compactness argument it follows that 
 $$
\|u_\epsilon-u\|_{L^\infty(B_{\frac{1}{K}})} \to 0\qquad \text{as $\epsilon \to 0$}
$$
Since $u$ is strictly convex,
choosing $\epsilon$ sufficiently small
we see that the sections $S_h^\epsilon(x)=S(x,T_{u_\epsilon}(x),u_\epsilon,h)$ satisfy 
\eqref{a1}, \eqref{a2}, \eqref{a8}, and \eqref{a3} with bounds independent of $\epsilon$.

In particular, assuming $\delta$ is small enough,
by \cite[Theorem 4.3]{DF}
applied to $\frac{1}{h}u_\epsilon(A\sqrt{h}x)$ we deduce that ${u}_\epsilon$ is of class $C^{1, 6/7}$ in $A(B_{\frac{1}{4}\sqrt{h}})$.
By duality, similarly we also have that its $c$-transform ${u}_\epsilon^{c}$ is of class $C^{1, 6/7}$ inside $A'^{-1}(B_{\frac{1}{4}\sqrt{h}}).$ Hence, by \cite[Theorem 2.3]{CF1} we deduce that ${u}_\epsilon$ is of class $C^2$ in a neighborhood of $x$.
Since $x \in B_{\frac{1}{2K}}$ was arbitrary, this proves that $u_\epsilon \in C^2(B_{\frac{1}{2K}})$ for any $\epsilon >0$ small enough.

Hence, up to proving our $W^{2,p}$ estimates with $u_\epsilon$
in place of $u$ and then letting $\epsilon \to 0$,
in the next sections we shall directly assume that $u \in C^2$.

\subsection{Density estimates}
The goal here is to show that, given a section
$S_h(x)\subset B_{\frac{1}{2K}},$ the density of ``bad points'' where the Hessian of $u$ is large has measure
that goes to zero as $\delta\to 0.$

Fix $x_0\in B_{\frac{1}{2K}},$ and let $y_0=T_u(x_0).$ Without loss of generality, we may assume $x_0=y_0=0.$ Also, as in 
the proof of Lemma \ref{eng} we can assume that \eqref{a4} holds.
In this way it follows that, for $h$ small, \eqref{a1}, \eqref{a2}, \eqref{a8}, and \eqref{a3} hold.

Perform the transformations
$$c(x,y)\mapsto \bar{c}(x,y):=\frac{1}{h}c(\sqrt{h}Ax, \sqrt{h}A'^{-1}y);$$
$$u(x)\mapsto \bar{u}(x):=\frac{1}{h}u(\sqrt{h}Ax);$$
$$f(x)\mapsto\bar{f}(x):=f(\sqrt{h}Ax),\qquad g(y)\mapsto \bar{g}(y)=g(\sqrt{h}A'^{-1}y).$$
Note that, by \eqref{a4} and \eqref{a3}, we have
\begin{equation}\label{a7}
\|\bar{c}+x\cdot y\|_{C^2(B_8\times B_8)}\leq \delta
\end{equation}
provided $h$ is sufficiently small.
Also, it follows by \eqref{a1}, \eqref{a2}, and \eqref{a8} that
\begin{equation}\label{a9}
B_{\frac{1}{3}}\subset S(0,0, \bar{u}, 1)\subset B_{3},
\end{equation}
\begin{equation}\label{b1}
B_{\frac{1}{3}}\subset T_{\bar{u}}(S(0,0, \bar{u}, 1))\subset B_{3},
\end{equation}
and
\begin{equation}\label{b2}
\biggl\|\bar{u}(x)-\frac{1}{2}|x|^2\biggr\|_{L^\infty(B_3)}\leq \eta.
\end{equation}
We now construct a smooth function $w$ that well approximates $\bar u$. Denote $X_1:=S(0,0, \bar{u}, h)$ and $Y_1:=T_{\bar{u}}(S(0,0, \bar{u}, h)).$

\begin{lemma}
Set $\rho:=\left(\frac{|X_1|}{|Y_1|}\right)^{1/n},$
and let $w$ be a convex function such that
 $(\nabla w)_\sharp \textbf{1}_{X_1}=\textbf{1}_{\rho Y_1}$ and $w(0)=u(0).$ 
 Then, for any $\gamma>0,$ there exist $\delta_\gamma, \eta_\gamma>0$ such that 
 \begin{equation}\label{b3}
 \|\bar{u}-w\|_{L^\infty(B_{1/4})}\leq \gamma
 \end{equation}
  and
  \begin{equation}\label{b4}
  \|w\|_{C^3(B_{1/6})}\leq C
  \end{equation}
   provided 
 $\delta\leq \delta_\gamma$ and $\eta\leq \eta_\gamma,$ where $C$ is a universal constant.
\end{lemma}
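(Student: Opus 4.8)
The plan is to prove the two estimates \eqref{b3} and \eqref{b4} separately, the first by a compactness/stability argument for optimal transport and the second by Caffarelli's interior regularity theory.

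For \eqref{b3}, I would argue by contradiction and compactness, exactly in the spirit of \cite[Lemma 4.1]{DF} and the derivation of \eqref{int} used above. Suppose the estimate fails: then there are sequences $\delta_j,\eta_j\to 0$, costs $\bar c_j$ with $\|\bar c_j+x\cdot y\|_{C^2(B_8\times B_8)}\le\delta_j$, densities $\bar f_j,\bar g_j$ satisfying \eqref{402}, and potentials $\bar u_j$ satisfying \eqref{a9}, \eqref{b1}, \eqref{b2} with $\eta_j$, for which $\|\bar u_j-w_j\|_{L^\infty(B_{1/4})}>\gamma$. Since the $\bar u_j$ are uniformly semiconvex and uniformly bounded on $B_3$ (by \eqref{b2}), up to a subsequence $\bar u_j\to u_\infty$ locally uniformly, where $u_\infty$ is convex, $(\nabla u_\infty)_\sharp\mathbf 1_{X_\infty}=\mathbf 1_{Y_\infty}$ for the quadratic cost (the cost $\bar c_j\to -x\cdot y$ in $C^2$, the densities converge to $\mathbf 1$, and the domains $X_1^j=S(0,0,\bar u_j,h)$, $Y_1^j$ converge in Hausdorff distance to some $X_\infty,Y_\infty$ with $B_{1/3}\subset X_\infty\subset B_3$, similarly for $Y_\infty$), and in fact $u_\infty(x)=\tfrac12|x|^2$ on $B_3$ by \eqref{b2}. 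On the other hand $w_j$ is the Brenier potential from $\mathbf 1_{X_1^j}$ to $\mathbf 1_{\rho_j Y_1^j}$ normalized by $w_j(0)=\bar u_j(0)\to u_\infty(0)$; by the same stability for Brenier solutions $w_j\to w_\infty$ locally uniformly, where $w_\infty$ is the Brenier potential from $\mathbf 1_{X_\infty}$ to $\mathbf 1_{\rho_\infty Y_\infty}$ with the right normalization — but this is precisely the same limit problem solved by $u_\infty$, so by uniqueness of Brenier potentials (up to additive constants, here fixed) $w_\infty=u_\infty$ on a neighborhood of $0$. This contradicts $\|\bar u_j-w_j\|_{L^\infty(B_{1/4})}>\gamma$, provided one checks that $B_{1/4}$ stays inside the relevant domains; the inclusions \eqref{a9}, \eqref{b1} guarantee this for $h$ (hence the rescaled problem) in the right range. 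The one subtle point I need to track is that $X_1=S(0,0,\bar u,h)$ and $Y_1=T_{\bar u}(S(0,0,\bar u,h))$ are \emph{small} sets of diameter $\sim\sqrt h$, so I should first rescale again by $1/\sqrt h$ — or, more cleanly, note that \eqref{a9}–\eqref{b2} already encode everything needed and work with $S(0,0,\bar u,1)$-type normalizations as stated; the key is that after the affine normalization built into \eqref{a1}–\eqref{a3}, the relevant sections are comparable to balls of fixed size.

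For \eqref{b4}, once \eqref{b3} is in hand (with $\gamma$ as small as we like), $w$ is the Brenier potential of optimal transport for the quadratic cost between the set $X_1$ and the convex set $\rho Y_1$, with normalized densities $\mathbf 1_{X_1}$, $\mathbf 1_{\rho Y_1}$. Here the crucial structural fact is that the target $\rho Y_1$ is \emph{convex}: indeed $Y_1=T_{\bar u}(S(0,0,\bar u,h))$, and by the argument producing \eqref{a2} this set is trapped between two balls, and — as in the proof of Lemma~\ref{eng}, where convexity of $\rho\mathcal C_2$ was used to invoke \cite{C92} — one shows $\rho Y_1$ is close to a ball hence, after a further normalization, one is in the setting of Caffarelli's theorem \cite{C92}: the potential of optimal transport onto a convex target with densities bounded away from $0$ and $\infty$ is $C^{1,\alpha}_{loc}$, strictly convex, and then (densities being essentially constant, in fact identically $\mathbf 1$, so the Monge–Amp\`ere right-hand side is smooth) $C^\infty_{loc}$ in the interior, with interior $C^3$ bounds depending only on $n$ and the geometry. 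Combining strict convexity of $w$ with the inclusions $B_{1/3}\subset X_1\subset B_3$ (after the appropriate rescaling these become the normalization used here) one gets that $B_{1/6}$ is compactly contained in the region of good interior estimates, yielding \eqref{b4} with a universal $C$.

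The main obstacle I anticipate is \emph{not} \eqref{b4} — which is a direct citation of \cite{C92} once the target is known to be convex — but rather making the compactness argument for \eqref{b3} fully rigorous given that $X_1$ and $Y_1$ are defined through $\bar u$ itself: the set one transports from depends on the solution, so the limiting domains $X_\infty,Y_\infty$ must be identified consistently for both $\bar u_j$ and $w_j$, and one must be careful that the normalization $w(0)=u(0)$ together with the Hausdorff convergence of domains really does pin down the limit uniquely. This requires (i) uniform strict convexity / a uniform modulus of convexity for $\bar u$ near $0$, which itself comes from \eqref{b2} and De~Philippis–Figalli's estimate \cite[Theorem 4.3]{DF} already invoked in the approximation section, and (ii) stability of Brenier potentials under simultaneous convergence of both marginals, including convergence of the domains of the indicator densities in Hausdorff distance — a standard but slightly delicate fact that I would state as a lemma or quote from \cite{V1}. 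Everything else (the rescalings, the bound $\|\bar c+x\cdot y\|_{C^2}\le\delta$ passing to the limit, measurability of the densities) is routine.
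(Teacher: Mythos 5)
Your argument for \eqref{b3} is correct and is the same compactness/stability argument (\`a la \cite[Lemma 4.1]{DF}) that the paper uses, so that half matches.

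The argument you give for \eqref{b4} has a genuine gap. You assert that the target $\rho Y_1$ is \emph{convex} and then cite \cite{C92} directly. But $Y_1=T_{\bar u}(S(0,0,\bar u,h))$ is the image of a section under the transport map, and there is no reason for this set to be convex — being trapped between two balls by \eqref{b1} is not the same as being convex, and ``close to a ball in Hausdorff distance'' does not give convexity either. Contrast this with the proof of Lemma~\ref{eng}, where the set $\rho\mathcal C_2$ really \emph{is} convex because $\mathcal C_2$ is convex by hypothesis in Proposition~\ref{p1}; here the target $Y_1$ is not among the data of Proposition~\ref{p1}, it is produced by the solution, and its convexity is not available. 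So the direct citation of \cite{C92} does not go through.

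What the paper actually does for \eqref{b4} is different and is the step you are missing: having obtained \eqref{b3}, one chooses $\gamma\leq\eta$ and \emph{combines} \eqref{b3} with \eqref{b2} to get the additional estimate
\begin{equation*}
\biggl\|w-\frac{1}{2}|x|^2\biggr\|_{L^\infty(B_{1/4})}\leq 2\eta,
\end{equation*}
i.e.\ the derived bound \eqref{b5}. It is this closeness of $w$ to the paraboloid, not convexity of the target, that drives the regularity: it forces $w$ to be strictly convex on a smaller ball and keeps $\nabla w$ near the origin well inside $\rho Y_1$, so that near $0$ the Brenier potential $w$ is an Alexandrov solution of $\det D^2w=1$ with controlled geometry; one then applies Caffarelli's interior theory (as in Step~1 of \cite[Theorem~4.3]{DF}) to get the universal $C^3$ bound on $B_{1/6}$. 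Your proposal never derives \eqref{b5} and never uses the strict-convexity mechanism; it short-circuits to a convexity of the target that is not there. This is the step you should repair.
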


\begin{proof}
The bound
\eqref{b3} follows from a compactness argument similar to the proof of \cite[Lemma 4.1]{DF}.
Also, taking $\gamma\leq \eta$, \eqref{b2} and \eqref{b3} imply that
\begin{equation}\label{b5}
\biggl\|w(x)-\frac{1}{2}|x|^2\biggr\|_{L^\infty(B_{1/4})}\leq 2\eta.
\end{equation}
Thanks to \eqref{b5}, as in the proof of Lemma \ref{eng} (see also
Step 1 in the proof of \cite[Theorem 4.3]{DF}) we can apply \cite{C92}  
to deduce that $ \|w\|_{C^3(B_{1/6})}\leq C$ for
some universal constant $C.$
\end{proof}

Let $L$ be the operator defined by $$L\U(x):=D^2\bar{u}(x)+D_{xx}\bar{c}\bigl(x, T_{\U}(x)\bigr).$$
By \eqref{402} and \eqref{a7}, we have
\begin{equation}
\label{MA }
\det (L\U(x))=\left|\det\left(D_{xy}\bar{c}\bigl(x,T_{\U}(x)\bigr) \right) \right| \frac{\bar{f}(x)}{\bar{g}(T_{\U}(x))}=1+O(\delta).
\end{equation}
We now follow the argument in \cite{C902} to establish the density estimate. Since the argument is rather standard,
we shall just emphasize the main points, referring to \cite{C902} or
\cite[Chapter 4.7]{Fbook} for more details.

\begin{lemma}\label{den1}
Let $\U, w$ be as above,
and denote by $\Gamma\left(\U-\frac{w}{2}\right)$ the convex envelope of $\U-\frac{w}{2}.$
Then, for any Borel set $E\subset B_{1/6},$ we have 
\begin{equation}
\biggl|\nabla \Gamma\Bigl(\U-\frac{w}{2}\Bigr)(E)\biggr|\leq \biggl(\frac{1}{2^n}+O(\delta)\biggr) \biggl|E\cap\biggl\{\Gamma\Bigl(\U-\frac{w}{2}\Bigr)=\U-\frac{w}{2}\biggl\}\biggl|
\end{equation}
\end{lemma}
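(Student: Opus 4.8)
The plan is to adapt Caffarelli's classical argument (as in \cite{C902}) comparing the Monge--Amp\`ere measure of $\bar u$ with that of $w$, but carried out on the convex envelope of $\bar u - \frac w2$. First I would recall that, since we have reduced to the case $\bar u \in C^2$, the function $\bar u$ solves the Monge--Amp\`ere type equation, so by \eqref{MA } we have $\det(L\bar u(x)) = 1 + O(\delta)$ for all $x \in S(0,0,\bar u,1)$, while $w$ satisfies $\det D^2 w = \frac{|X_1|}{|\rho Y_1|} = 1$ exactly on $X_1$. The first step is to observe that $D^2\bar u(x) = L\bar u(x) - D_{xx}\bar c(x, T_{\bar u}(x))$ and, since $\|D_{xx}\bar c\|_{C^0(B_8\times B_8)} \le \delta$ by \eqref{a7}, we have $\det D^2 \bar u(x) = \det L\bar u(x) + O(\delta) = 1 + O(\delta)$ wherever $D^2\bar u(x) \ge 0$; more precisely, at any point $x$ in the contact set $\{\Gamma(\bar u - \tfrac w2) = \bar u - \tfrac w2\}$ we have $D^2\bar u(x) \ge \tfrac12 D^2 w(x) > 0$ (using \eqref{b4} and the uniform convexity of $w$ from \cite{C92}), so the Monge--Amp\`ere measure of $\bar u$ is absolutely continuous there with density $1 + O(\delta)$.

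Next I would set $\phi := \bar u - \frac w2$ and analyze $\Gamma(\phi)$. The key linear-algebra fact is that on the contact set, $D^2\Gamma(\phi)(x) = D^2\phi(x) = D^2\bar u(x) - \tfrac12 D^2 w(x) \ge 0$, and I would bound $\det D^2\Gamma(\phi)(x)$ from above. Writing $M := D^2\bar u(x)$ and $N := \tfrac12 D^2 w(x)$, we have $M \ge N \ge 0$ and $\det M = 1 + O(\delta)$, with $N$ uniformly elliptic (eigenvalues bounded above and below by universal constants, since $\|w\|_{C^3(B_{1/6})}\le C$ and $w$ is uniformly convex near the origin, so $D^2 w \ge c_0\,\mathrm{Id}$ and $\det(\tfrac12 D^2 w) \ge 2^{-n}c_0^n$; actually near $0$, $w \approx \tfrac12|x|^2$ forces $\tfrac12 D^2 w \approx \tfrac12\mathrm{Id}$). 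The elementary inequality $\det(M - N) \le \det M - \det N$ for $M \ge N \ge 0$ (proved by simultaneous diagonalization, or by the concavity of $\det^{1/n}$ on positive matrices) then gives $\det D^2\Gamma(\phi)(x) \le \det M - \det N \le (1 + O(\delta)) - \big(\tfrac12\big)^n\det D^2 w(x)$. Using that near the origin $D^2 w$ is close to the identity (by \eqref{b5} and $C^3$ bounds, $\|D^2 w - \mathrm{Id}\|$ is small, say $O(\sqrt\eta)$ by interpolation), we get $\det D^2 w(x) = 1 + O(\sqrt\eta)$, hence $\det D^2\Gamma(\phi)(x) \le 1 - 2^{-n} + O(\delta) + O(\sqrt\eta) = \tfrac{1}{2^n} + O(\delta)$ after absorbing $\eta$ into $\delta$ (since $\eta = \eta(\delta) \to 0$).

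Then, for a Borel set $E \subset B_{1/6}$, the standard area-formula / change-of-variables estimate for the gradient image of a convex envelope gives
$$
\Big|\nabla\Gamma(\phi)(E)\Big| = \Big|\nabla\Gamma(\phi)\big(E \cap \{\Gamma(\phi) = \phi\}\big)\Big| \le \int_{E \cap \{\Gamma(\phi)=\phi\}} \det D^2\Gamma(\phi)(x)\,dx,
$$
where the first equality holds because $\Gamma(\phi)$ is affine on each connected component of the complement of the contact set so its gradient image there is Lebesgue-null, and the inequality is the Monge--Amp\`ere / Jacobian bound on the contact set (valid since $\Gamma(\phi) \in C^{1,1}_{loc}$ and $\bar u, w \in C^2$). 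Combining with the pointwise bound $\det D^2\Gamma(\phi) \le 2^{-n} + O(\delta)$ on the contact set yields exactly
$$
\Big|\nabla\Gamma(\phi)(E)\Big| \le \Big(\tfrac{1}{2^n} + O(\delta)\Big)\,\Big|E \cap \{\Gamma(\phi) = \phi\}\Big|,
$$
which is the claim.

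The main obstacle I anticipate is making the matrix inequality quantitative in the right way: one must be careful that $\det(M-N) \le \det M - \det N$ is being applied with $N = \tfrac12 D^2 w$ genuinely positive definite with eigenvalues near $\tfrac12$ (not merely $\ge 0$), and that the "$O(\delta)$" error truly only depends on $\delta$ (through \eqref{a7}, \eqref{402}) and on $\eta$ (through \eqref{b2}, \eqref{b5}), with $\eta \to 0$ as $\delta \to 0$, so both errors can be lumped into a single $O(\delta)$. A secondary technical point is justifying the passage from the pointwise Monge--Amp\`ere equation for $\bar u$ (valid a.e., or everywhere in the $C^2$ reduction) to the inequality on the contact set of $\Gamma(\phi)$, and checking that the contact set lies inside $S(0,0,\bar u,1)$ where \eqref{MA } holds — this is where one uses $E \subset B_{1/6}$ together with \eqref{a9} and \eqref{b5} to ensure $\Gamma(\phi)$ and its contact set stay in the region where all the estimates are available.
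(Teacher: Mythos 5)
Your overall scheme — reduce to the smooth case, compare the Monge--Amp\`ere measure of the convex envelope of $\phi := \bar u - \tfrac{w}{2}$ on the contact set with a determinant, and then use the pointwise matrix inequality to conclude — is exactly the route the paper takes, and your treatment of the area formula, the contact-set reduction, and the error bookkeeping via \eqref{a7}, \eqref{402}, \eqref{b5} is fine. However, there is a genuine and decisive error in the matrix inequality step.

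You invoke the inequality $\det(M-N) \le \det M - \det N$ for $M \ge N \ge 0$. That inequality is indeed true, but it is a strictly weaker consequence of the Minkowski determinant inequality than what is needed here, and it yields the bound
$$
\det D^2\Gamma(\phi)(x) \;\le\; \det M - \det N \;=\; \bigl(1 + O(\delta)\bigr) - 2^{-n}\det D^2 w(x) \;=\; \bigl(1 - 2^{-n}\bigr) + O(\delta).
$$
You then assert ``$1 - 2^{-n} + O(\delta) + O(\sqrt\eta) = \tfrac{1}{2^n} + O(\delta)$,'' which is simply false for $n\ge 2$: the quantity $1-2^{-n}$ is strictly \emph{larger} than $2^{-n}$ (for instance $3/4$ vs.\ $1/4$ when $n=2$), and no amount of absorbing $\eta$ into $\delta$ fixes a constant that is off by a fixed amount. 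The constant $2^{-n}$ in the lemma is not cosmetic: in the next step one applies the lemma with $E = B_{1/8}$ and uses that $|\nabla\Gamma(\phi)(B_{1/8})| \ge (2^{-n} - O(\delta))|B_{1/8}|$ (since $\phi$ is uniformly close to $\tfrac14|x|^2$) to deduce that the contact set fills a fraction $\ge 1 - C\delta^{1/2}$ of $B_{1/8}$. With your weaker constant $1-2^{-n}$ one would only get a lower bound of order $\tfrac{2^{-n}}{1-2^{-n}}$ on that fraction, which is not close to $1$ and kills the argument.

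The fix is to use the Minkowski inequality in its unrescaled form, exactly as the paper does: apply
$$
\bigl[\det(A+B)\bigr]^{1/n} \ge (\det A)^{1/n} + (\det B)^{1/n}, \qquad A,B \ge 0,
$$
with $A = D^2\bar u - \tfrac12 D^2 w$ and $B = \tfrac12 D^2 w + D_{xx}\bar c$, so that $A + B = L\bar u$. On the contact set $A \ge 0$, and one gets
$$
\det\Bigl(D^2\bar u - \tfrac12 D^2 w\Bigr) \;\le\; \Bigl[(\det L\bar u)^{1/n} - \bigl(\det(\tfrac12 D^2 w + D_{xx}\bar c)\bigr)^{1/n}\Bigr]^n \;\le\; \Bigl(\tfrac12 + O(\delta)\Bigr)^n \;=\; \tfrac{1}{2^n} + O(\delta),
$$
which is the bound you need. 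In short: do not raise the Minkowski inequality to the $n$-th power term by term; keep the $1/n$-th roots until the very last step.
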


\begin{proof}
Noticing that
$\det D^2w= 1$,
$\det D^2\bar u=1+O(\delta)$, and
$D_{xx}\bar{c}=O(\delta)$,
since $w$ is uniformly convex
and $\det D^2\Gamma(\U-\frac{w}{2})$ is a measure supported on $\{\Gamma(\U-\frac{w}{2})=\U-\frac{w}{2}\}$,
it follows by the Area Formula (see for instance \cite[Proposition A.4.19]{Fbook}) that
\begin{eqnarray*}
\biggl|\nabla \Gamma\Bigl(\U-\frac{w}{2}\Bigr)(E)\biggr|&=&\int_{E\cap\{\Gamma(\U-\frac{w}{2})=\U-\frac{w}{2}\}}\det D^2\Bigl(\U-\frac{w}{2}\Bigr)\\
&=&\int_{E\cap\{\Gamma(\U-\frac{w}{2})=\U-\frac{w}{2}\}} \det\biggl[ L\U-\Bigl(D^2\frac{w}{2}+D_{xx}\bar{c}\bigl(x, T_{\U}x\bigr)\Bigr)\biggr]\\
&\leq& \int_{E\cap\{\Gamma(\U-\frac{w}{2})=\U-\frac{w}{2}\}}\biggl(\det(L\U)^{1/n}-  \det\Bigl[\Bigl(D^2\frac{w}{2}+O(\delta)\Bigr)\Bigr]^{1/n}\biggr)^n\\
&\leq&  \int_{E\cap\{\Gamma(\U-\frac{w}{2})=\U-\frac{w}{2}\}} \biggl(1+O(\delta)-\Bigl(\frac{1}{2}(\det D^2w)^{1/n}-O(\delta)\Bigr)\biggr)^n\\
&\leq& \biggl(\frac{1}{2}+O(\delta)\biggr)^n\biggl|E\cap\biggl\{\Gamma\Bigl(\U-\frac{w}{2}\Bigr)=\U-\frac{w}{2}\biggr\}\biggr|,
\end{eqnarray*}
where we used the inequality
$$
[\det (A+B)]^{1/n}\geq (\det A)^{1/n}+(\det B)^{1/n}\qquad \forall\,A,B \text{ symmetric, nonnegative definite}
$$
(see for instance \cite[Lemma A.1.3]{Fbook} for a proof).
\end{proof}

By using Lemma \ref{den1}, we can follow the lines of proof of \cite[Lemma 6]{C902} (see also the proof of \cite[Lemma 4.7.1]{Fbook})
to establish the estimate
$$
\frac{|\{\Gamma(\U-\frac{w}{2})=\U-\frac{w}{2}\}\cap B_{1/8}|}{|B_{1/8}|}\geq 1-C\delta^{1/2},
$$
from which one immediately obtain the following bound 
(see \cite[Corollary 1]{C902} or Step 6 in the proof of 
\cite[Lemma 4.7.1]{Fbook}):

\begin{lemma}[Density estimate]\label{den3}
Let $\U$ be as above. Then there exist universal constants $N>1, \eta>0$ such that
\begin{equation}\label{den4}
\left|\left\{x\in S^{\U}_{\eta}(0): \|D^2\U(x)\|\geq N\right\}\right|\leq N\delta^{1/2}\left|S^{\U}_{\eta}(0)\right|.
\end{equation}
\end{lemma}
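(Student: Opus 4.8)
The plan is to combine the pointwise convex-envelope bound of Lemma \ref{den1} with a second-order normalization so that the ``contact set'' of $\Gamma(\U-\tfrac{w}{2})$ fills almost all of a fixed ball, and then trade in information about the contact set for an $L^\infty$-type bound on $D^2\U$ on that contact set. First I would record, following Caffarelli \cite{C902} (or \cite[Lemma 4.7.1]{Fbook}), the standard consequence of Lemma \ref{den1}: iterating the inequality $|\nabla\Gamma(\U-\tfrac{w}{2})(E)|\le (\tfrac{1}{2^n}+O(\delta))|E\cap\{\Gamma=\U-\tfrac w2\}|$ together with the lower bound on the Monge--Amp\`ere measure of $\U-\tfrac w2$ coming from \eqref{MA } and the uniform convexity \eqref{b4} of $w$, one gets
\begin{equation*}
\frac{\bigl|\{\Gamma(\U-\tfrac{w}{2})=\U-\tfrac{w}{2}\}\cap B_{1/8}\bigr|}{|B_{1/8}|}\ge 1-C\delta^{1/2},
\end{equation*}
exactly as asserted in the text just before the statement. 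This is the place where the ABP-type argument enters, and it is the only genuinely analytic input; the rest is bookkeeping.

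Next I would convert the smallness of the non-contact set into the desired Hessian bound. On the contact set $\{\Gamma(\U-\tfrac{w}{2})=\U-\tfrac{w}{2}\}$ the function $\U-\tfrac w2$ agrees with its convex envelope, hence $D^2\bigl(\U-\tfrac w2\bigr)\ge 0$ there, i.e.
\begin{equation*}
D^2\U(x)\ge \tfrac12 D^2w(x)\ge \tfrac{1}{2M}\,{\rm Id}\qquad\text{on the contact set,}
\end{equation*}
using \eqref{b4} and uniform convexity of $w$. For the upper bound I would exploit that $\det D^2\U = \det\bigl(L\U - D_{xx}\bar c\bigr)$ and, by \eqref{MA } and $D_{xx}\bar c=O(\delta)$, $\det D^2\U(x)=1+O(\delta)$ at every $x$ where $u$ is twice differentiable (recall by the approximation in Section \ref{sect:approx} we may assume $u\in C^2$). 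A bounded determinant together with a lower bound on the smallest eigenvalue forces an upper bound on the largest eigenvalue: if $D^2\U(x)\ge \tfrac{1}{2M}{\rm Id}$ and $\det D^2\U(x)\le 2$, then the product of the remaining $n-1$ eigenvalues is at most $2(2M)^{n-1}$, so each eigenvalue, hence $\|D^2\U(x)\|$, is at most some universal $N$. Therefore
\begin{equation*}
\{x\in B_{1/8}: \|D^2\U(x)\|\ge N\}\subset B_{1/8}\setminus\{\Gamma(\U-\tfrac w2)=\U-\tfrac w2\},
\end{equation*}
and the measure of the right-hand side is at most $C\delta^{1/2}|B_{1/8}|$ by the contact-set estimate.

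Finally I would replace $B_{1/8}$ by a section $S^\U_\eta(0)$. By \eqref{a9} (in the rescaled picture $S(0,0,\bar u,1)$, and after the further scaling by $\sqrt h A$ that produced $\U$) the section $S^\U_\eta(0)$ is comparable to a ball of radius $\sim\sqrt\eta$; choosing $\eta$ a small universal constant we can arrange $B_{c\sqrt\eta}\subset S^\U_\eta(0)\subset B_{1/8}$, so that $|S^\U_\eta(0)|\simeq |B_{1/8}|$ up to universal constants, and the bound on $\{\|D^2\U\|\ge N\}\cap B_{1/8}$ yields \eqref{den4} with $N$ enlarged by a universal factor. I expect the only subtle point to be the bookkeeping of the several normalizations (the affine map $A$, the parabolic rescaling by $\sqrt h$, and the choice of $\eta$) so that the section $S^\U_\eta(0)$ genuinely sits inside the ball on which Lemma \ref{den1} was proved; the analytic heart, the ABP/contact-set estimate, is quoted from \cite{C902}.
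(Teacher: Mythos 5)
Your proposal is correct and follows essentially the same route as the paper, which itself delegates both steps (the ABP-type contact-set estimate and the passage from contact-set smallness to a Hessian density bound) to Caffarelli \cite{C902} and \cite[Lemma 4.7.1]{Fbook}. You simply spell out the second step: on the contact set of $\Gamma(\U-\tfrac w2)$ one has $D^2\U\geq\tfrac12 D^2w\geq c_0\,{\rm Id}$ by uniform convexity of $w$ (from \eqref{b4} and $\det D^2w=1$), and then the determinant constraint from \eqref{MA } together with $D_{xx}\bar c=O(\delta)$ pins $\|D^2\U\|$ from above by a universal $N$ there, so the bad set lives in the complement of the contact set; choosing $\eta$ universal so that $S^\U_\eta(0)\subset B_{1/8}$ and $|S^\U_\eta(0)|\gtrsim|B_{1/8}|$ completes the transfer. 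This is exactly the content of the cited "Step 6'' in the references, so the two arguments coincide.
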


\subsection{$W^{2,p}$ estimate}
We now prove our $W^{2,p}$ interior estimates.
Recall that we are assuming that $u \in C^2$.

As in the proof of Lemma \ref{eng}, for any $x\in B_{\frac{1}{2K}}$ and $h>0$ small enough, there exists an affine transformation $A$ with $\det A=1$ such that 
\begin{equation}\label{f1}
A(B_{\frac{1}{3}\sqrt{h}})\subset S_h(x)\subset A(B_{3\sqrt{h}}).
\end{equation}
We define the normalized size of the section $S_h(x)$ as
\begin{equation}\label{f2}
\mathbf{a}(S_h(x)):=\|A^{-1}\|^{2}.
\end{equation}
Although $A$ is not unique, if $A_1$ and $A_2$ are two affine transformations that satisfy \eqref{f1} then both $\|A_1^{-1}A_2\|$ and $\|A_2^{-1}A_1\|$ are universally bounded, thus the normalized size is well defined up to universal constants.

With the notation from the previous section, we see that the estimate \eqref{den4} can be rewritten in terms of $u$ and becomes 
\begin{equation}\label{f3}
\bigl|S_h(x)\cap \bigl\{\|D^2u(x)\|\geq N\mathbf{a}\bigl(S_h(x)\bigr)\bigr\}\bigr|\leq C\delta^{1/2}|S_{h}(x)|
\end{equation}
for any $h$ small enough.
Also, since $\det D^2 \bar u=1+O(\delta)$, it follows that 
$$
\|D^2u\|\leq N\qquad \Longrightarrow \qquad D^2 u \geq \frac{1}{2N^{n-1}}{\rm Id}.
$$
Thus, up to enlarging $N$ and using Lemma \ref{den3} again, we deduce that
$$ |S_{h}(u)|\leq C\biggl|S_{\sigma h}\cap\biggl\{\frac{\mathbf{a}\bigl(S_h(x)\bigr)}{N}\leq \|D^2u\|\leq N\mathbf{a}\bigl(S_h(x)\bigr)\biggr\}\biggr|,$$
that combined with \eqref{f3} yields
\begin{equation}\label{f4}
\bigl|S_h(x)\cap \bigl\{\|D^2u\|\geq N\mathbf{a}\bigl(S_h(x)\bigr)\bigr\}\bigr|\leq C\delta^{1/2}\biggl|S_{\sigma h}\cap\biggl\{\frac{\mathbf{a}\bigl(S_h(x)\bigr)}{N}\leq \|D^2u\|\leq N\mathbf{a}\bigl(S_h(x)\bigr)\biggl\}\biggr|.
\end{equation}
Also, by \eqref{a3} we have
\begin{equation}\label{f5}
\text{diam}(S_h(x))\leq Ch^{1/2}\|A\|\leq Ch^{1/2-\theta}\leq \hat C\mathbf{a}\bigl(S_h(x)\bigr)^{-\beta},
\end{equation}
where $\beta:=\frac{1}{4\theta}-\frac{1}{2}.$ 

Let $M\gg 1$ to be fixed later, set $\rho_0:=\frac{1}{2K},$ and for $m\geq 1$ we define $\rho_m$ inductively by
\begin{equation}
\label{eq:rho}
\rho_m:=\rho_{m-1}-\hat CM^{-m\beta},
\end{equation}
where the constants $\hat C, \beta$ are as those in \eqref{f5}.
Note that, by taking $M$ large enough so that $$\sum_{m=1}^\infty \hat CM^{-m\beta}<\frac{1}{4K},$$ we can ensure that 
$\rho_m\geq \frac{1}{4K}$ for all $m\geq 1.$

Now, for $k \geq 0$ we set $D_k:=\{x\in B_{\rho_k}: \|D^2u\|\geq M^k\}.$ We shall prove the following lemma.
\begin{lemma}\label{f7}
$|D_{k+1}|\leq N\delta^{1/2}|D_k|.$
\end{lemma}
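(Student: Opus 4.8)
The plan is to prove Lemma \ref{f7} by a Vitali-covering/iteration argument in the spirit of Caffarelli's $W^{2,p}$ estimates, using the density estimate Lemma \ref{den3} as the building block. The starting point is the inclusion $D_{k+1}\subset D_k$ (a point where $\|D^2u\|\geq M^{k+1}$ certainly satisfies $\|D^2u\|\geq M^k$, and $\rho_{k+1}\le\rho_k$), so that it suffices to bound the measure of $D_{k+1}$ by a fixed fraction of the measure of $D_k$.

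The key geometric observation is a normalization-versus-Hessian dictionary. For each point $x\in D_k$, one wants to pick a section $S_{h_x}(x)$ whose normalized size $\mathbf{a}(S_{h_x}(x))$ is comparable to $M^k$; this is possible because, by \eqref{f3} together with Lemma \ref{den3}, at ``most'' points inside a section the Hessian is of order $\mathbf{a}(S_h(x))$, and because $h\mapsto \mathbf{a}(S_h(x))$ sweeps out all scales continuously (using the engulfing property and \eqref{f1}--\eqref{f2}). More precisely, for $x\in D_k\subset B_{\rho_k}$ one chooses $h_x$ so that $\mathbf{a}(S_{h_x}(x))\sim M^{k}/N$; then on one hand $D_{k+1}\cap S_{h_x}(x)$ is contained in $\{\|D^2u\|\ge M^{k+1}\}\supset\{\|D^2u\|\ge N\mathbf{a}(S_{h_x}(x))\}$ (after adjusting constants so that $M\ge N^2$, say), so by \eqref{f3},
\begin{equation*}
|D_{k+1}\cap S_{h_x}(x)|\le |S_{h_x}(x)\cap\{\|D^2u\|\ge N\mathbf{a}(S_{h_x}(x))\}|\le C\delta^{1/2}|S_{h_x}(x)|;
\end{equation*}
and on the other hand, by \eqref{f4}, $|S_{h_x}(x)|$ itself is controlled by the measure of the subset of $S_{\sigma h_x}(x)$ where $\|D^2u\|\sim\mathbf{a}(S_{h_x}(x))\sim M^{k}/N\ge M^{k-1}$, which lies in $D_{k-1}$... one must be slightly careful here and instead arrange the comparison so that this subset lies in $D_k$ itself (this is where choosing $\mathbf{a}\sim M^k$ rather than $M^k/N$, and absorbing the $N$ into the definition of $D_k$ via a harmless shift of index or enlargement of $N$, matters). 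I would then apply the Vitali covering Lemma \ref{vital} to the family $\{S_{h_x}(x)\}_{x\in D_{k+1}}$: extract a finite disjoint subfamily $\{S_{\sigma h_{x_i}}(x_i)\}$ with $D_{k+1}\subset\bigcup_i S_{h_{x_i}}(x_i)$. Summing the first displayed estimate over $i$ gives $|D_{k+1}|\le C\delta^{1/2}\sum_i|S_{h_{x_i}}(x_i)|$, and summing the engulfing-based estimate \eqref{f4} over the disjoint sections $S_{\sigma h_{x_i}}(x_i)\subset B_{\rho_k}$ (which is where the inductive definition \eqref{eq:rho} of $\rho_m$ together with \eqref{f5} is used — to guarantee that the enlarged sections $S_{\sigma h_{x_i}}(x_i)$, whose diameters are $\lesssim \mathbf{a}^{-\beta}\sim M^{-k\beta}$, still sit inside $B_{\rho_{k}}$ when started from $B_{\rho_{k+1}}$) yields $\sum_i|S_{h_{x_i}}(x_i)|\le C|D_k|$. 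Combining the two gives $|D_{k+1}|\le C\delta^{1/2}|D_k|$, which is the claim after relabeling $N$.

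The main obstacle, and the step requiring the most care, is the bookkeeping in the scale selection: showing that for every $x\in D_{k+1}$ one can choose $h_x\le r_1$ with $\mathbf{a}(S_{h_x}(x))$ comparable (up to the universal constants hidden in \eqref{f1}--\eqref{f2}) to $M^k$, and simultaneously ensuring that the ``good'' portion of $S_{\sigma h_x}(x)$ produced by \eqref{f4} genuinely lands in $D_k$ (not $D_{k-1}$) so that the double counting closes, and that all the sections involved stay inside $B_{1/2K}$ so that Lemmas \ref{eng}, \ref{vital}, and \ref{den3} all apply. One also needs $M$ chosen large compared to the universal constants $N$, $C$, $\sigma$ (so that e.g. $M\gg N^2$ and the $\rho_m$ telescoping series converges below $1/4K$), but $M$ must be universal — independent of $\delta$ — so that iterating Lemma \ref{f7} $k$ times gives $|D_k|\le (N\delta^{1/2})^k|B_{1/2K}|$ and hence, after summing $\sum_k M^{pk}|D_k|<\infty$ for $\delta$ small depending on $p$, the desired $W^{2,p}$ bound. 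I would carry out the scale-selection lemma first, then the two summations, then the closing inequality.
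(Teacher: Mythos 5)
Your proposal follows the same Vitali-covering scheme as the paper, built on the density estimate \eqref{f4}, with the same ingredients in the same order: scale selection so that $\mathbf{a}(S_{h_x}(x))$ sits at a prescribed dyadic level, Vitali covering, \eqref{f4} on each section, summation using disjointness of the $S_{\sigma h_{x_i}}(x_i)$, and the telescoping radii $\rho_m$ to keep everything inside $B_{1/2K}$. The one place where your sketch wobbles — you first propose $\mathbf{a}\sim M^k/N$, notice the ``good'' set then lands in $D_{k-1}$, and suggest shifting $N$ into the definition of $D_k$ — is resolved in the paper by the clean calibration $\mathbf{a}(S_{h_x}(x))=NM^k$: then $\mathbf{a}/N=M^k$ puts the good set in $D_k$, while $N\mathbf{a}=N^2M^k\leq M^{k+1}$ (so $M\geq N^2$ suffices) captures $D_{k+1}$, and no relabeling is needed. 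With that adjustment your argument is the paper's.
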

\begin{proof}
Let $M\gg N$ to be chosen later, and
for any $x\in D_{k+1}$ choose a section $S_{h_x}(x)$ such that
\begin{equation}\label{f6}
 \textbf{a}(S_{h_x}(x))=NM^k.
 \end{equation}
 Such a section always exists because
 $\textbf{a}(S_{h})\approx 1<NM^k$ when $h=h_0$
 is a small but fixed universal constant, while  
 $$\textbf{a}(S_{h})\approx \|D^2u(x)\|\geq M^{k+1}>NM^k \qquad \text{as $h\rightarrow 0$}
 $$
 (the estimate $\textbf{a}(S_{h})\approx \|D^2u(x)\|$ follows by a simple Taylor expansion, see for instance \cite[Remark 4.7.5]{Fbook}).
 Hence, by continuity there exists $h_x\in (0, h_0)$ such that \eqref{f6} holds.

 Now, by Lemma \ref{vital}, we can find a finite number of sections $\{S_{h_{x_i}}(x_i)\}_{i=1,\ldots,m}$
 covering $D_{k+1}$ such that $\{S_{\sigma h_{x_i}}(x_i)\}_{i=1,\ldots, m}$ are disjoint. Then, it follows by \eqref{f4} that
 \begin{equation}\label{f8}
\bigl|S_{h_i}(x_i)\cap \bigl\{\|D^2u\|\geq N^2M^k\bigr\}\bigr|\leq N\delta^{1/2}\bigl|S_{\sigma h_i}(x_i)\cap\bigl\{M^k\leq \|D^2u\|\leq N^2M^k\bigr\}\bigr|.
\end{equation}
Hence, recalling \eqref{f5} and \eqref{eq:rho}, we obtain
\begin{eqnarray*}\label{f9}
|D_{k+1}|&\leq& \sum_{i=1}^m \bigl|S_{h_i}(x_i)\cap \bigl\{\|D^2u\|\geq N^2M^k\bigr\}\bigr|\\
&\leq&N\delta^{1/2}\sum_{i=1}^m\bigl|S_{\sigma h_i}(x_i)\cap\bigl\{M^k\leq \|D^2u\|\leq N^2M^k\bigr\}\bigr|\\
&\leq&N\delta^{1/2}|D_k|
\end{eqnarray*}
provided $M\geq N^2.$ 
\end{proof}

\begin{proof}[Proof of Proposition \ref{p1}]
Thanks to Lemma \ref{f7}, we have
$$|D_k|\leq (N\delta^{1/2})^k|D_0|\leq \frac{1}{M^{k(p+1)}}|B_{\frac{1}{2K}}|$$
provided $\delta\leq \frac{1}{N^2M^{2(p+1)}}.$
Therefore
\begin{eqnarray*}
\int_{B_{\frac{1}{4K}}}\|D^2u\|^p&=&p\int_{B_{\frac{1}{4K}}}t^{p-1}|B_{\frac{1}{4K}}\cap\{\|D^2u\|\geq t\}|\\
&\leq&C\sum_{k=1}^\infty M^{kp}|D_k|\leq C,
\end{eqnarray*}
as desired.

\end{proof}

\section{Proof of Theorem \ref{t1} and Corollary \ref{c1}}

\subsection{Proof of Theorem \ref{t1}.} 
By the argument in \cite[Section 3]{DF}, we only need to establish the following result, which is a strengthened version of
Proposition \ref{p1} for continuous densities. Indeed, the lemma shows that the exponent $p$ in the $W^{2,p}$ estimate is {\em independent} of
the parameter $\delta.$ This is crucial in showing that the singular set $\Sigma$ can be chosen independently of $p.$
\begin{lemma}\label{g4}
Let $f, g$ be two {continuous} densities supported in $B_{1/K}\subset X_1\subset B_K$ and $B_{1/K}\subset Y_1\subset B_K$ respectively. Suppose that
\begin{equation}\label{g1} 
\|f-\textbf{1}\|_{L^\infty(X_1)}+\|g-\textbf{1}\|_{L^\infty(Y_1)}\leq \delta,
\end{equation}
\begin{equation}\label{g2}
\biggl\|u-\frac{1}{2}|x|^2\biggr\|_{L^\infty(B_K)}\leq \delta
\end{equation}
and 
\begin{equation}\label{g3}
\|c(x,y)+x\cdot y\|_{C^2(B_K\times B_K})\leq \delta.
\end{equation}
Then there exists $\bar \delta >0$, depending only on $n$ and $K$, such that $u\in W^{2,p}(B_{\frac{1}{2K}})$ for any $p\geq1$  provided $\delta \leq \bar \delta$.
\end{lemma}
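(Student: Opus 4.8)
The plan is to reduce the continuous-density case to the already-established Proposition \ref{p1} by a scaling and zooming argument, exploiting the fact that continuity of $f$ and $g$ means the densities become \emph{arbitrarily} close to constants after rescaling around any point, so that the smallness threshold in Proposition \ref{p1} can be met at every sufficiently small scale. Concretely, I would first fix $p\geq 1$ and let $\bar\delta_p>0$ be the threshold furnished by Proposition \ref{p1} (equivalently by the version in Remark \ref{rmk:other hyp}, since we are assuming \eqref{g2} rather than convexity of the target). The key point to extract is that, because $f$ and $g$ are continuous, for every $x_0\in B_{\frac1{2K}}$ we may pick a small radius $r=r(x_0)>0$ and, after the affine normalization of the section $S_h(x_0)$ as in \eqref{f1}--\eqref{f2} (which has $\det A=1$ and controlled eccentricity once $\delta$ is small, thanks to \eqref{g2} and the engulfing estimates \eqref{a1}--\eqref{a3}), the rescaled potential $\bar u(x)=\frac1h u(\sqrt h A x)$ together with the rescaled densities $\bar f,\bar g$ and rescaled cost $\bar c$ satisfy all the hypotheses of Proposition \ref{p1}/Remark \ref{rmk:other hyp} with parameter $\delta'\leq\bar\delta_p$: the cost term is handled by \eqref{a7}, the $L^\infty$-closeness of $\bar u$ to $\frac12|x|^2$ by \eqref{b2}, and — crucially — the density oscillation $\|\bar f-\mathbf 1\|_{L^\infty}+\|\bar g-\mathbf 1\|_{L^\infty}$ can be made as small as we like by choosing $h$ small enough, since $\bar f(x)=f(\sqrt h A x)/f(0)$ (after a harmless normalization) converges uniformly to $1$ on $B_8$ as $h\to 0$ by continuity of $f$, and similarly for $\bar g$ on $T_{\bar u}(S(0,0,\bar u,1))$.

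The second step is to run the covering machinery already set up in Section 3. Applying Proposition \ref{p1} to each rescaled section gives $\bar u\in W^{2,p}(B_{1/(4K)})$ with a universal bound, which translated back yields an estimate of the form
\begin{equation}\label{eq:local w2p}
\int_{S_{h/2}(x_0)}\|D^2u\|^p \,dx \leq C\,\mathbf a\bigl(S_h(x_0)\bigr)^p\,|S_h(x_0)|,
\end{equation}
valid for all $x_0\in B_{\frac1{2K}}$ and all $h\leq h(x_0)$ small. From here I would mimic exactly the argument in the proof of Proposition \ref{p1}: define $D_k:=\{x\in B_{\rho_k}:\|D^2u\|\geq M^k\}$ with $\rho_k$ as in \eqref{eq:rho}, cover $D_{k+1}$ by sections with $\mathbf a(S_{h_{x_i}}(x_i))=NM^k$ using the Vitali covering Lemma \ref{vital}, and invoke the density estimate \eqref{den4} — whose threshold $N$ and $\eta$ are \emph{universal}, independent of $p$ — on each such normalized section to get $|D_{k+1}|\leq N\delta_k^{1/2}|D_k|$, where now $\delta_k$ is the (small) density oscillation at the relevant scale. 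Since continuity of $f,g$ forces $\delta_k\to 0$ as $k\to\infty$, for $k$ large we have $N\delta_k^{1/2}\leq M^{-(p+1)}$, hence $|D_k|$ decays faster than any power of $M^{-k}$, and summing $p\sum_k M^{kp}|D_k|<\infty$ gives $u\in W^{2,p}(B_{\frac1{2K}})$. The fact that $N$ does not depend on $p$ is exactly what makes the threshold $\bar\delta$ in the statement independent of $p$: we only need $\delta$ small enough (depending on $n,K$ alone) to start the iteration, and the geometric decay at small scales — coming from continuity, not from smallness of $\delta$ — takes over for every $p$ simultaneously.

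The main obstacle I anticipate is making the scale-by-scale reduction uniform and quantitative. Two subtleties require care: first, after the affine change $A$, the image of the ball on which $\bar g$ must be close to a constant is $T_{\bar u}(S(0,0,\bar u,1))$, which by \eqref{b1} sits inside $B_3$, but one must check that $\sqrt h A'^{-1}$ maps this set into a genuinely small ball around $T_u(x_0)$ so that continuity of $g$ can be used — this needs the bound $\|A^{-1}\|\leq h^{-\theta}$ from \eqref{a3} and the fact that $h^{1/2-\theta}\to 0$. Second, one must verify that the density-oscillation parameter $\delta_k$ entering the density estimate at the $k$-th stage is controlled by the modulus of continuity of $f$ and $g$ evaluated at a radius that shrinks with $k$ (again via diam$(S_h(x))\leq Ch^{1/2-\theta}$ from \eqref{f5}), so that $\delta_k\to 0$ effectively; this is where the interplay between the engulfing property, the normalized size $\mathbf a$, and the modulus of continuity must be bookkept precisely. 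Once these uniformity points are secured, the rest is a routine repetition of the Caffarelli-type iteration already carried out for Proposition \ref{p1}.
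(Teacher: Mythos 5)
Your proposal identifies the right mechanism that makes $\bar\delta$ independent of $p$: continuity of $f,g$ means that after affine-rescaling around any point the \emph{effective} data-oscillation tends to zero as the scale shrinks, so for any target $p$ one can find a scale $h$ small enough that the threshold $\bar\delta_p$ of Proposition~\ref{p1} is met, and the initial smallness $\delta\leq\bar\delta$ need only be enough to start (namely, to guarantee the $C^{1,\alpha}$ regularity, the normalization \eqref{a1}--\eqref{a3}, and the diameter decay \eqref{f5}). This is exactly the core of the paper's argument.

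That said, your proposal muddles two distinct ways to close the argument and uses a slightly different input than the paper. On the first point: having applied Proposition~\ref{p1} at each rescaled section to conclude $u\in W^{2,p}(B_r(x_0))$ for a small ball around each $x_0$, a plain compactness covering of $B_{1/(2K)}$ finishes the proof; re-running the $D_k$-iteration from the proof of Proposition~\ref{p1} on top of that is redundant. What you are actually sketching in your second paragraph is a \emph{replacement} for applying Proposition~\ref{p1} as a black box: apply only the density estimate \eqref{den4} at each scale, observe that the coefficient $N\delta_k^{1/2}$ decays to zero because sections with $\mathbf a=NM^k$ have diameters $\lesssim (NM^k)^{-\beta}\to 0$ by \eqref{f5} and $f,g$ are continuous, and conclude $|D_k|$ decays faster than any power. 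That route is also valid (and in a sense cleaner, since it gives all $p$ simultaneously from one iteration), but one should commit to one route or the other, not both. On the second point: the paper does \emph{not} invoke Remark~\ref{rmk:other hyp}. Instead, after rescaling it takes the convex hull $\mathcal C_1:=[\tilde S]$ of the rescaled section, which is a convex set sandwiched between $B_{1/4}$ and $B_4$ thanks to \eqref{a1}, \eqref{a2}, \eqref{h1}, \eqref{h2}; sets $\mathcal C_2:=T_{u_1}([\tilde S])$ and $\bar f:=f_1\textbf{1}_{\mathcal C_1}$, $\bar g:=g_1\textbf{1}_{\mathcal C_2}$; and applies Proposition~\ref{p1} with the roles of $x$ and $y$ switched so that the convex $\mathcal C_1$ plays the role of the target. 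This yields $u_1^c\in W^{2,p}(B_{1/8})$, and then $u_1\in W^{2,p}(B_{1/8})$ follows by the symmetric argument (or the relation between $D^2u$ and $D^2u^c$). Your Remark~\ref{rmk:other hyp} route reaches $u_1$ directly from \eqref{b2}, which is a legitimate alternative, though you should note that Remark~\ref{rmk:other hyp} is stated without proof in the paper while the convex-hull-plus-role-switch route only uses Proposition~\ref{p1} itself; if you go your way, you would want to also check that the rescaled problem is genuinely an optimal-transport problem between $\bar f$ and $\bar g$ with potential $\bar u$ (i.e.\ that restriction to a section preserves optimality), which is also needed in the paper's route via $[\tilde S]$ and is covered by cyclic monotonicity.

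Two technical points you flag at the end are correctly identified as the places that need care, and the paper handles them exactly as you anticipate: the diameter control comes from \eqref{a3} and \eqref{f5}, and the control of the rescaled target-side density uses the $C^{1,\alpha}$ regularity of $u$ (giving \eqref{h2}) together with continuity of $g$. Overall: right idea and right mechanism, but the logical organization should be streamlined into one of the two closing arguments rather than stacking both.
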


\begin{proof}

Fix $x_0\in B_{\frac{1}{2K}},$ and without loss of generality assume $x_0=0,$ $T_u(x_0)=0$, and $u(x_0)=0$.
For small $h,$ similarly to the proof of Lemma \ref{eng}, there exists an affine transformation $A$ with 
$\det A=1, \|A\|, \|A^{-1}\|\leq h^{-\theta},$ such that \eqref{a1}, \eqref{a2}, and \eqref{a3} hold, where 
$\theta$ can be as small as we want provided $\delta$ is sufficiently small. Also, we may assume \eqref{a4} holds.

Given a set $E$, let $[E]$ denote its convex hull.
By \cite[Lemma 3.2]{CF} we have that 
\begin{equation}\label{h1}
{\rm dist}(S_h, [S_h])\leq Ch^{1-6\theta}.
\end{equation}
Also, by $C^{1,\alpha}$ regularity of $u$ (hence, $C^{0,\alpha}$
regularity of $T_u$), we have 
\begin{equation}\label{h2}
{\rm dist}\bigl(T_u(S_h), T_u([S_h])\bigr)\leq Ch^{(1-6\theta)\alpha}.
\end{equation}

Perform the transformations

$$
u(x)\mapsto \frac{1}{h}u(\sqrt{h}A^{-1}x):=u_1(x);
$$
$$
c(x, y) \mapsto \frac{1}{h}c(\sqrt{h}A^{-1}x, \sqrt{t}A'y):=c_1(x,y);
$$
$$
f(x) \mapsto f_1(x):=f(\sqrt{h}A^{-1}x),\
\qquad g(y)\mapsto g_1(y):=g(\sqrt{h}A'y);
$$
$$
S_h\mapsto \tilde{S}:=\frac{1}{\sqrt{h}}A (S_h).
$$
Also, set $\mathcal{C}_1:=[\tilde{S}],$  $\mathcal{C}_2:=T_{u_1}([\tilde{S}])$, 
 $\bar{f}:=f_1\textbf{1}_{\mathcal{C}_1},$ 
 and $\bar{g}:=g_1\textbf{1}_{\mathcal{C}_2}.$

By \eqref{a1}, \eqref{a2}, \eqref{h1}, \eqref{h2} we have
$$
B_{\frac{1}{4}}\subset \mathcal{C}_1\subset B_{4};
$$
$$
B_{\frac{1}{4}}\subset \mathcal{C}_2\subset B_{4};
$$
$$
\|\bar{f}-\textbf{1}_{\mathcal{C}_1}\|_{L^\infty(B_{4})}=o(1),\quad \|\bar{g}-\textbf{1}_{\mathcal{C}_2}\|_{L^\infty(B_{4})}=o(1) \rightarrow 0\ \ \  \text{as}\ h\rightarrow 0.
$$
It is also easy to check that
$$
\|c_1+x\cdot y\|_{C^2(B_{4}\times B_{4})}=o(1) \rightarrow 0\ \ \ \text{as}\ h \rightarrow 0,
$$

Since $\mathcal{C}_1$ is convex, we can apply Proposition \ref{p1} (switch the role of $x$ and $y$) to deduce that, given any $p \geq 1$, we can choose $h$ small enough so that $u^c_1$, the $c$-transform of $u_1$, belongs to
$W^{2,p}(B_{\frac{1}{8}})$ provided $h$ is sufficiently small. By a symmetric argument (or using that $D^2 u$ and $D^2 u^c$
are related), one gets that, given any $p\geq 1,$
$u_1\in W^{2,p}(B_{\frac{1}{8} })$ provided $h$ is sufficiently small.
Rescaling back to $u$
this proves that, given $p\geq 1,$ $u\in W^{2,p}(B_r)$ provided $r$ is small enough (the smallness depending on $h$).
Thanks to this fact, Lemma \ref{g4} follows from a standard covering argument.

\end{proof}

\begin{proof}[Proof of Theorem \ref{t1}] 
Theorem \ref{t1} is an easy consequence of Lemma \ref{g4}, following the argument in \cite[Section 3]{DF}.
\end{proof}
 
\begin{proof}[Proof of Corollary \ref{c1}] 
 Corollary \ref{c1} follows by the same reasoning as the proof of \cite[Theorem 1.4]{DF}.
 \end{proof}


\section{Proof of Theorem \ref{t3}}
Since interior $W^{2,p}$ estimates follows from Lemma \ref{g4} and \cite[Lemma 3.11]{CF1},
we focus on the estimate near the boundary.

Under the assumptions of Theorem \ref{t3}, it is proved in \cite{Chen} that, for 
any $\alpha<1$, there exists $\bar{\delta}>0$ such that
$u\in C^{1,\alpha}(\bar{X})$
provided $\delta\leq \bar{\delta}$. 
Let
$$\bar{h}(x):=\max\{h>0: S_h(x)\subset X\},$$
and set $S_x:=S_{\bar h(x)}(x)$.
As in the proof of Lemma \ref{eng}, there exists an affine transformation $A$ with 
$\det A=1, \|A\|, \|A^{-1}\|\leq \bar h(x)^{-\theta},$ such that
\begin{equation}\label{161}
B_{\frac{1}{3}\sqrt{\bar{h}(x)}} \subset A(S_x)\subset B_{3\sqrt{\bar{h}(x)}}.
\end{equation}
Hence, since $\|A\|, \|A^{-1}\|\leq \bar h(x)^{-\theta}$,
it follows by \eqref{161} and the definition of $\bar{h}(x)$ that $$\text{dist}(x, \partial X)\leq C\bar{h}(x)^{\frac{1}{2}-\theta},$$
which proves that
\begin{equation}\label{162}
S_{x}\subset X_{C\bar{h}(x)^{\frac{1}{2}-\theta}}:=\left\{z \in X: \text{dist}(z, \partial X)\leq C\bar{h}(x)^{\frac{1}{2}-\theta}\right\}.
\end{equation}

Fix $h_0>0$ small but universal. Similarly to the proof of Lemma 3.1, we can find a Vitali covering of $X_{h_0},$  denoted by $\{S_{\bar{h}(x_i)}(x_i)\},$ such that the sections
$\{S_{\sigma\bar{h}(x_i)}(x_i)\}$ are disjoint.

Now, fix $x_0\in X_{h_0}$ a point close to $\partial X.$ Without loss of generality we may assume $x_0=0,$ $T_u(x_0)=0,$ and
$u(x_0)=0$.
Consider the section $S_{\bar{h}}:=S(0, 0, u, \bar h(0)).$
As in the proof Lemma \ref{g4}, we perform the transformations \eqref{f1}, \eqref{f2}, \eqref{f3}, and \eqref{f4},
and we set $\mathcal{C}_1:=[\tilde{S}]$, $\mathcal{C}_2:=T_{u_1}([\tilde{S}]),$ 
$\bar{f}:=f_1\textbf{1}_{\mathcal{C}_1},$ 
and $\bar{g}:=g_1\textbf{1}_{\mathcal{C}_2},$ so that
$$
B_{\frac{1}{4}}\subset \mathcal{C}_1\subset B_{4};
$$
$$
B_{\frac{1}{4}}\subset \mathcal{C}_2\subset B_{4};
$$
$$
\|\bar{f}-\textbf{1}_{\mathcal{C}_1}\|_{L^\infty(B_{4})}=o(1),\quad \|\bar{g}-\textbf{1}_{\mathcal{C}_2}\|_{L^\infty(B_{4})}=o(1) \rightarrow 0\ \ \  \text{as}\ \bar{h}, \delta\rightarrow 0;
$$
$$
\|c_1+x\cdot y\|_{C^2(B_{4}\times B_{4})}=o(1) \rightarrow 0\ \ \ \text{as}\ \bar{h}, \delta \rightarrow 0.
$$
Note that, by \eqref{a8}, $u_1$ is arbitrarily close to the function $\frac{1}{2}|x|^2$.
Let $v$ be the convex function solving $(\nabla v)_\sharp \textbf{1}_{\mathcal{C}_1}=\textbf{1}_{\rho\mathcal{C}_2}$ with $v(0)=u(0)$ and $\rho:=\left(\frac{|\mathcal{C}_1|}{|\mathcal{C}_2|}\right)^{1/n}.$ 
By a compactness argument we have that 
$$
\|u_1-v\|_{L^\infty(B_{\frac{1}{4}})}\leq \omega(\delta),
$$
where $\omega:\R^+\rightarrow \R^+$ satisfies $\omega(r)\rightarrow 0$ as $r\rightarrow 0.$ 
This implies that also $v$ is uniformly close to the function $\frac{1}{2}|x|^2$ inside $B_{\frac{1}{4}}$, hence \cite{C92}
yields that $\|v\|_{C^3(B_{\frac{1}{5}})}\leq C$ for some universal constant $C$,
and that $v$ is uniformly convex in $B_{\frac{1}{5}}$. Thus,
if we set $S_t^1:=S(0,0,u_1,t),$ we can apply Proposition \ref{p1} to deduce that
$\|u_1\|_{W^{2,p}(S^{u_1}_t)}\leq C$ for some universal constants $t, C,$.

 Rescaling back to $u,$ this proves that
\begin{equation}\label{g1 p}
\int_{S_{t\bar{h}(x_0)}} \|D^2u\|^p\leq C\bar{h}(x_0)^{-2p\theta}|S_{\sigma\bar{h}(x_0)}|.
\end{equation}

Now, consider the family of sections $\mathcal{F}_k:=\{S_{\bar{h}(x_i)}(x_i):h_02^{-k-1}\leq \bar{h}(x_i)\leq h_02^{-k}\}.$
Then, since $|X_{r}|\approx r$ for $r$ small
and the sections $\{S_{\sigma\bar{h}(x_i)}(x_i)\}$ are disjoint,
it follows by \eqref{g1 p} and \eqref{162} that
\begin{eqnarray*}
\sum_{S_{\bar{h}(x_i)}(x_i)\in \mathcal{F}_k}\int_{S_{\bar{h}(x_i)}(x_i)}\|D^2u\|^p&\leq& C\sum_{S_{\bar{h}(x_i)}(x_i)\in \mathcal{F}_k}\bar{h}(x_i)^{-2p\theta}|S_{\sigma\bar{h}(x_i)}|\\
&\leq& C2^{2kp\theta} |X_{C(h_02^{-k})^{\frac{1}{2}-\theta}}|\\
&\leq&  C2^{2kp\theta}(h_02^{-k})^{\frac{1}{2}-\theta}\\
&\leq& C2^{-k(\frac{1}{2}-3p\theta)}
\end{eqnarray*}
Choosing $\theta$ small enough so that $3p\theta\leq \frac{1}{4}$,
we can sum the above estimate with respect to $k$
to get
$$
\int_{X_{h_0}}\|D^2u\|^p \leq C.
$$
Since $\int_{X\setminus X_{h_0}}\|D^2u\|^p \leq C$
by interior regularity, this concludes the proof. \qed




\bibliographystyle{amsplain}

\end{document}